\newtheorem{remark}{Remark}
\newtheorem{lemma}{Lemma}
\newtheorem{theorem}{Theorem}
\newtheorem{definition}{Definition}
\newtheorem{cor}{Corollary}
\newtheorem{proposition}{Proposition}
\newtheorem{example}{Example}
\numberwithin{equation}{section}
\keywords{Sum-free set, $k$-regular sequence, Cantor-like sequence, Thue-Morse sequence, Automatic sequence}
\subjclass[2010]{11B75, 11B85}
\begin{document}
\title[On the regular sum-free sets]{On the regular sum-free sets$^\star$}
\date{\today}
\author[Z.-X. Wen]{Zhi-xiong Wen}
\author[J.-M. Zhang]{Jie-meng Zhang}
\author[W. Wu]{Wen Wu$^\dagger$}
\thanks{$\star$ This work is supported by NSFC (Grant Nos. 11401188,11371156).\\
\indent $\dagger$ Wen Wu is the corresponding author.
\emph{E-mail}: wuwen@hubu.edu.cn. }

\address[Z.-X. Wen and J.-M. Zhang]{School of Mathematics and Statistics, Huazhong University of Science and Technology, Wuhan 430074, P.R. China}
\email{zhi-xiong.wen@hust.edu.cn \textrm{(Z.-X. Wen)}, zhangfanqie@hust.edu.cn \textrm{(J.-M. Zhang)}}
\address[W. Wu]{Department of Mathematics, Hubei University, Wuhan 430062, P.R. China}
\email[Corresponding author]{wuwen@hubu.edu.cn; hust.wuwen@gmail.com \textrm{(W. Wu)}}

\begin{abstract}
Cameron introduced a bijection between the set of sum-free sets and
the set of all zero-one sequences. In this paper, we study the sum-free sets
of natural numbers
corresponding to certain zero-one sequences which contain the Cantor-like
sequences and some substitution sequences, etc. Those sum-free sets
considered as integer sequences are $2$-regular. We also prove that
sequences corresponding to certain sum-free sets are automatic.
\end{abstract}

\maketitle

\section{Introduction}
A set of integers, denoted by $S$, is called a \emph{sum-free set} if $%
S\cap(S+S)=\emptyset$, where $S+S$ denotes the set of pairwise sums, i.e., $%
S+S=\{x+y:x,y\in S\}$. Thus, for any sum-free set $S$, there do not exist $%
x,y,z\in S$ for which $x+y=z$. It is natural to arrange the elements of a
sum-free set $S$ in ascending order, so $S=(S_{n})_{n\geq 0} $ can also
be treated as an integer sequence.

Cameron and Erd\H{o}s \cite{Ca,ca} studied the number of sum-free sets which
are contained in the first $n$ integers. They showed that the number of
sum-free subsets of $\{\frac{n}{3},\frac{n}{3}+1,\cdots,n\}$ is $%
O(2^{\frac{n}{2}})$. Calkin \cite{cs} showed that
the Hausdorff dimension of the sum-free sets is at most $0.599$. 
Calkin \cite{number} proved that the number of the sum-free subsets of
 $\{1,2,\cdots,n\}$ is $\emph{o}(2^{n(1/2+\epsilon)})$ for every $%
\epsilon>0$. {\L}uczak and Schoen \cite{TT} studied the properties of
$k$-sum-free sets with precise upper density.

There are several methods to construct infinite sum-free sets. One of them
is to construct such a set directly using its definition; that is, to construct it from numbers which are not the sum of two earlier numbers. Another way is to
construct the sum-free set from an infinite zero-one sequence. This method
was introduced by Cameron. Cameron \cite{Pj} defined a
natural bijection between $\Sigma$ and $\mathfrak{S}$, where $\Sigma$ and $%
\mathfrak{S}$ denote the set of all zero-one sequences and the set of all
sum-free sets respectively.

\subsection{The bijection between $\Sigma$ and $\mathfrak{S}$.}

Let $S\in\mathfrak{S}$ be a sum-free set and $\mathbf{v}$ be a ternary
sequence defined as follow:
\begin{equation}  \label{vn}
v_n=\left\{
\begin{array}{ll}
1, & \hbox{if $n\in S$;} \\
\ast, & \hbox{if $n\in S+S$;} \\
0, & \hbox{otherwise.}%
\end{array}
\right.
\end{equation}
By deleting all the $\ast$'s in $\mathbf{v}$, we obtain a unique zero-one
sequence $\mathbf{v}^{\prime}$. This process introduces a mapping from $%
\mathfrak{S}$ to $\Sigma$. Calkin \cite{Nc} showed that this mapping is a
bijection and denoted its inverse by $\theta$. Thus $\theta:\mathbf{v}%
^{\prime}\mapsto S$ is a bijection from $\Sigma$ to $\mathfrak{S}$.

Since there is a one-to-one correspondence between the set of zero-one sequences and the set of sum-free sets (of positive integers), one would like to know the connection between these two.

\subsection{Periodicity of $S$}

A sum-free set $S$ is said to be \emph{ultimately periodic} if there exist
positive integers $m,n_{0}$ such that for all $n>n_{0}$, $n\in S$ if and
only if $n+m\in S$. If $n_{0}=0$, then $S$ is \emph{periodic}. It is not
easy to show whether a given sum-free set is periodic or not.

In \cite{Pc}, Cameron observed that if a sum-free set is (ultimately)
periodic, the corresponding zero-one sequence is also (ultimately) periodic.
This was proved by Calkin and Finch in \cite{Nc}. 
Conversely, Cameron \cite{Nc} also asked whether sum-free sets
corresponding to (ultimately) periodic zero-one sequences are (ultimately)
periodic or not. This question is still open. With the help of a computer,
Calkin and Finch \cite{Nc} presented some sum-free sets, which correspond to
periodic zero-one sequences, and appear to be aperiodic (aperiodicity checked up
to $10^7$). There is no proof to show whether such sum-free sets are periodic or aperiodic.
Calkin and Erd\H{o}s \cite{aperiodic} showed that a class of aperiodic
sum-free sets $S$ is incomplete, i.e., $\mathbb{N}\backslash (S+S)$ is an
infinite set. Later, Calkin, Finch and Flowers \cite{difference} introduced
the concept of difference density which can be used to test whether specific sets
are periodic or not. These tests produced further evidence that certain sets are not ultimately periodic. Payne \cite{GP}
studied the properties of some sum-free sets over an additive group.

\subsection{Main results.}

Motivated by Cameron's question, in this paper, we will investigate the
regularity of $S$. We would like to know what kind of zero-one sequences
will lead to $k$-regular sum-free sets?

\begin{definition}
A sum-free set $S$ is called a \emph{$k$-regular sum-free set} if the
sequence $(S_n)_{n\geq 0}$ is $k$-regular\footnote{%
For the definition of $k$-regular sequences, refer to Definition \ref{def:regular} in
Section \ref{reg}.} for some $k\in\mathbb{N}$.
\end{definition}

Let $\mathbf{c}=c_{0}c_{1}c_{2}\cdots $ be a zero-one sequence with $c_0=1$.
Let $\mu_{n}$ denote the number of zeros between the $n$-th and the $(n+1)$-th ones in $%
\mathbf{c}$. An equivalent definition of $\mu_n$ is
in Definition \ref{deff}. The sum-free set corresponding to $%
\mathbf{c}$ is denoted by $S$. Our results are stated as follows:
\bigskip

\noindent \textbf{Main results I.} \emph{Suppose the integer sequence
$(\mu _{n})_{n\geq 1}$ is $%
2 $-regular and satisfies the conditions
\begin{equation}
\left\{
\begin{array}{l}
\mu _{2^{m}}>\sum_{i=1}^{2^{m}-1}\mu _{i}+2^{m}+\frac{3^{m}-1}{2}, \\
\mu _{2^{m}+k}=\mu _{k},\quad \forall ~0<k<2^{m},%
\end{array}%
\right.  \label{hm}
\end{equation}%
} \emph{for all $m\geq 1$, then $S$ is a $2$-regular sum-free set.}

\bigskip

For any fixed $l_{1}\geq 0,~l_{2}\geq 0$ and $l_{3}\geq 3$, let $\sigma
(l_{1},l_{2},l_{3})$ be a substitution over the alphabet $\{0,1\}$ given by
\begin{equation}
1\mapsto 1\overbrace{0\cdots 0}^{l_{1}}1\overbrace{0\cdots 0}^{l_{2}},\quad
\ \ \ 0\mapsto \overbrace{0\cdots 0}^{l_{3}}.  \label{sigma}
\end{equation}%
The sequence
\begin{eqnarray}
\mathbf{c}_{l_{1},l_{2},l_{3}} &=&\sigma (l_{1},l_{2},l_{3})^{\infty }(1)
\nonumber \\
&=&10\cdots 010\cdots 010\cdots 01\cdots  \label{cm}
\end{eqnarray}%
is called a \emph{Cantor-like sequence.} In particular, $\mathbf{c}_{1,0,3}$ is the Cantor sequence. In the rest of this paper, since $%
l_{1},l_{2},l_{3}$ are fixed in the context, we will simply use $\mathbf{%
c}$ to denote such Cantor-like sequences. It is worth pointing out that
Cantor-like sequences contain a class of automatic sequences and a class of
non-automatic substitution sequences.

\medskip

\noindent \textbf{Main results II.} \emph{Let }$\mathbf{c}$\emph{\ be a
Cantor-like sequence satisfying the assumption in Main results $I$.
Then the corresponding sum-free set $S$ modulo $2$ is
the Thue-Morse sequence\footnote{The Thue-Morse sequence beginning by $1$
is the infinite sequence $\mathbf{t}=t_0t_1\cdots\in\{0,1\}^{\mathbb{N}}$
satisfying the following relations: $t_0=1,t_{2n}=t_n,t_{2n+1}=1-t_n$.}
beginning by $1$
up to a coding.}

\medskip

We also investigate the zero-one sequences corresponding to a class of
sum-free sets generated from the set $(bn+1)_{n\geq 0}$ through base
changing where $b\in\mathbb{N}$ and $b\geq 2$. For example, let $S=\{1,3,5,7,\cdots \}$ where $%
S_{n}=(2n+1)_{n\geq 0}$. Suppose $n=\sum_{i=1}^{k}n_{i}\cdot 2^{i-1}$, with$%
~n_{i}\in \{0,1\}$ for any $1\leq i\leq k$, denote $S_{n}^{\prime
}:=\sum_{i=1}^{k}n_{i}\cdot 3^{i}+1$. Then $S^{\prime
}=\{1,4,10,13,28,31,37,40,\cdots \}$ is also sum-free. Moreover, the
corresponding zero-one sequence of $S^{\prime }$ is the Cantor sequence (see Remark \ref{rem:cantor}).

\medskip

\noindent\textbf{Main results III.} \emph{Suppose $b\geq2$, let $S$ be the
sum-free set given by $S_n=\sum_{i=1}^kn_i(2b-1)^i+1$ where $%
n=\sum_{i=1}^kn_i2^{i-1}$ with $n_i\in\{0,1\}$ , then $
\theta^{-1}(S)$ is an automatic sequence.}

\medskip

This paper is organized as follows. In Section 2, we introduce some
definitions and auxiliary
lemmas which are useful
in the proof of Theorem \ref{mainthm}. In Section 3, we give the explicit value of the sum-free
set $S$ corresponding to $\mathbf{c}$ and discuss the regularity of $S$.
A class of sum-free sets $S$ corresponding to Cantor-like sequences
is investigated in Section 4. In the last section, we discuss the automaticity of zero-one
sequences corresponding to certain sum-free sets.

\section{Preliminary}

\if
It is of interest to know whether the corresponding sum-free sets of
automatic binary sequence is automatic, this work is intended as an attempt
to motivate that is true. Cobham \cite{Ch} presented an important connection
between automatic sequences and fixed points of substitution of constant
length. We introduce the notation of substitution sequence. \fi

\subsection{$m$-complement of nonnegative integers.}

For $b\in \mathbb{N}$, let $\Sigma_b :=\{0,1,\cdots,b-1\}$. Let $\Sigma_b^k$ be
the set of words of length $k$ over $\Sigma_b$ and $\Sigma_b^{\ast}=\bigcup_{k%
\geq0}\Sigma_b^k$. The \emph{$b$-ary expansion} of $n$ is denoted by
\[
(n)_b:=\epsilon_k\epsilon_{k-1}\cdots\epsilon_1\in\Sigma_b^{k},
\]
where $\epsilon_k\neq 0$. The \emph{$b$-ary number} of $\mathbf{w}%
=w_kw_{k-1}\cdots w_1\in\Sigma_b^k$ is given by
\[
[\mathbf{w}]_b:=\sum_{i=1}^{k}w_i\cdot b^{i-1}.
\]

For $\mathbf{w}=w_kw_{k-1}\cdots w_1\in\Sigma_2^k$, the \emph{complement} of $%
\mathbf{w}$ is the word
\[
\bar{\mathbf{w}}=\bar{w}_k\bar{w}_{k-1}\cdots \bar{w}_1,
\]
where $\bar{\epsilon}=1-\epsilon$ for any $\epsilon\in\Sigma_2$.

Let $m\in\mathbb{N}$ be fixed and $n=\epsilon_m2^{m-1}+%
\cdots+\epsilon_12^0<2^m$ with $\epsilon_i\in\Sigma_2$. In this case, $%
\epsilon_m\epsilon_{m-1}\cdots\epsilon_1$ is a $2$-ary representation of $n$
of length $m$ which allows leading zeros, denoted by $(n)_{2,m}$.
The \emph{$m$-complement} of $n$,
denoted by $\bar{n}$, is given by
\[
\bar{n}=\left[\overline{(n)}_{2,m}\right]_2=\bar{\epsilon}_m2^{m-1}+\cdots+\bar{\epsilon}_12^0.
\]
For example, when $m=3$, then $\bar{1}=1\cdot 2^2+1\cdot 2^1+0\cdot 2^0=6$, $%
\bar{3}=2^2=4$. We also use notation `$0n$' and `$1n$' in form, which
denote integers of $2$-ary representation `$0\epsilon_m\epsilon_{m-1}\cdots%
\epsilon_1$' and `$1\epsilon_m\epsilon_{m-1}\cdots\epsilon_1$' respectively.
In fact,
\[
0n=n,\quad 1n=2^m+n.
\]

\subsection{Two auxiliary bijections}

Let $(h(i))_{i\geq 1}$ be a strictly increasing positive integer sequence. Define
the mapping $f: \Sigma_{2}^m\times\Sigma_{2}^m\rightarrow\mathbb{N}$ by
\[
(x,y)\mapsto f(x,y)=\sum_{i=1}^m(x_i+y_i)h(i)+2.
\]
Let $\sim$ be the equivalence relation on $\Sigma_{2}^m\times\Sigma_{2}^m$
induced by $f$, i.e.,
\[
(x,y)\sim(x^{\prime},y^{\prime}) \textrm{ if and only if } f(x,y)=f(x^{\prime},y^{%
\prime}).
\]
Denote $\widetilde{\Sigma}^m:=(\Sigma_2^m\times\Sigma_2^m)/{\sim}$ for
simplicity. Hence, we have defined an injection
\[
f:\widetilde{\Sigma}^m\rightarrow\mathbb{N}.
\]

For any $m\geq 0$, define two mappings $\varphi_m,~\psi_m$ as follows:
for any $(u,v)\in%
\widetilde{\Sigma}^m$,
\begin{eqnarray*}
(u,v) &\mapsto& \varphi_m(u,v)=(\bar{u},\bar{v}), \\
(u,v) &\mapsto& \psi_m(u,v)=(0u,1v).
\end{eqnarray*}
Notice that
\begin{eqnarray}
f\circ\varphi_m(u,v)&=&f(\bar{u},\bar{v})  \nonumber \\
&=&\sum_{i=1}^m (\bar{u}_i+\bar{v}_i)h(i)+2  \nonumber \\
&=&2\sum_{i=1}^mh(i)-f(u,v)+4,  \label{eqn:phi}
\end{eqnarray}
and
\begin{equation}
f\circ\psi_m(u,v)=f(0u,1v)=f(u,v)+h(m+1).  \label{eqn:psi}
\end{equation}
By (\ref{eqn:phi}) and (\ref{eqn:psi}),
\[
(\bar{u},\bar{v})\sim(\bar{u}^{\prime},\bar{v}^{\prime})%
\Longleftrightarrow(u,v)\sim (u^{\prime},v^{\prime}) \Longleftrightarrow
(0u,1v)\sim (0u^{\prime},1v^{\prime}).
\]
Thus $\varphi_m$ and $\psi_m$ are well-defined injections.

But these two mappings are not always bijections. Additional conditions are
needed to ensure that. Before this, we need the following notation: for any
$0\leq k<2^m$ with $k=\epsilon_m2^{m-1}+\cdots+\epsilon_12^0$ and $%
\epsilon_i\in\Sigma_2$, denote
\begin{equation}
M_k:=\sum_{i=1}^m\epsilon_ih(i)+1, ~~M_{2^m+k}:=M_k+h(m+1).
\label{lem1eqn31}
\end{equation}
Note that $M_{2^m-1}=\sum_{i=1}^mh(i)+1$. Moreover, for any $m\geq0, 0\leq k<2^m$%
, let
\begin{eqnarray*}
L_m &:=& \{(u,v)\in\widetilde{\Sigma}^m:~f(u,v)\leq M_{2^m-1}\}, \\
R_m &:=& \{(u,v)\in\widetilde{\Sigma}^m:M_{2^m-1}+1<f(u,v)\leq 2M_{2^m-1}\},
\\
L(k)&:=& \{(u,v)\in\widetilde{\Sigma}^m:~M_k<f(u,v)<M_k+K\}, \\
R(k)&:=& \{(0u,1v)\in\widetilde{\Sigma}%
^{m+1}:~M_{2^m+k}<f(0u,1v)<M_{2^m+k}+K\},
\end{eqnarray*}
where $K=\mu_k+\alpha_k+1$, and $(\mu_i)_{i\geq 0},~(\alpha_i)_{i\geq 0}$
are positive integer sequences.

\begin{lemma}
\label{lm4} $\varphi_m|_{L_m}$ is a bijection from $L_m$ to $R_m$.
\end{lemma}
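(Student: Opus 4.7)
The plan is to exploit the explicit formula for $f\circ\varphi_m$ established in (\ref{eqn:phi}) together with the definition $M_{2^m-1}=\sum_{i=1}^{m}h(i)+1$. Rewriting (\ref{eqn:phi}) gives the key identity
\[
f(\bar u,\bar v)=2M_{2^m-1}+2-f(u,v),
\]
so $\varphi_m$ acts as a reflection on the values of $f$ about the midpoint $M_{2^m-1}+1$. Since $\varphi_m$ is an involution on $\widetilde{\Sigma}^m$, it is automatically bijective on the whole domain; the content of the lemma is that this involution swaps the two slices $L_m$ and $R_m$.

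First I would verify $\varphi_m(L_m)\subseteq R_m$. If $(u,v)\in L_m$, then $f(u,v)\leq M_{2^m-1}$, and the identity immediately gives $f(\bar u,\bar v)\geq M_{2^m-1}+2>M_{2^m-1}+1$. For the upper bound, note that $f(u,v)=\sum_{i=1}^{m}(u_i+v_i)h(i)+2\geq 2$, with equality at $(u,v)=(0^m,0^m)$; plugging into the identity yields $f(\bar u,\bar v)\leq 2M_{2^m-1}$. Hence $(\bar u,\bar v)\in R_m$.

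Next I would show surjectivity by using the involution $\varphi_m\circ\varphi_m=\mathrm{id}$. Given any $(x,y)\in R_m$, set $(u,v):=(\bar x,\bar y)$. The same identity now gives
\[
f(u,v)=f(\bar x,\bar y)=2M_{2^m-1}+2-f(x,y)\leq 2M_{2^m-1}+2-(M_{2^m-1}+2)=M_{2^m-1},
\]
using $f(x,y)>M_{2^m-1}+1$, hence $f(x,y)\geq M_{2^m-1}+2$ (here we use that $f$ takes integer values). So $(u,v)\in L_m$ and $\varphi_m(u,v)=(x,y)$. Finally, injectivity of $\varphi_m|_{L_m}$ is inherited from the fact, already established in the paragraph preceding the lemma, that $\varphi_m$ is a well-defined injection on $\widetilde{\Sigma}^m$.

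There is no real obstacle here; the only point to be careful about is the strict inequality $M_{2^m-1}+1<f(u,v)$ in the definition of $R_m$, which must be upgraded to $f(u,v)\geq M_{2^m-1}+2$ by integrality so that the arithmetic in the reflection argument balances. Everything else reduces to substituting the identity from (\ref{eqn:phi}) into the defining inequalities of $L_m$ and $R_m$.
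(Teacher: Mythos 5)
Your proof is correct and follows essentially the same route as the paper: both use the reflection identity $f(\bar u,\bar v)=2M_{2^m-1}-f(u,v)+2$ from (\ref{eqn:phi}) to show $\varphi_m$ maps $L_m$ into $R_m$, and then exploit the involution property to get surjectivity. Your explicit appeal to integrality to upgrade $f(x,y)>M_{2^m-1}+1$ to $f(x,y)\geq M_{2^m-1}+2$ is a minor tidying of a step the paper leaves implicit.
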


\begin{proof}
By (\ref{eqn:phi}), $f(\bar{u},\bar{v})=2M_{2^m-1}-f(u,v)+2.$ Hence, if $%
(u,v)\in L_m$, then
\[
M_{2^m-1}+2\leq f(\bar{u},\bar{v})\leq 2M_{2^m-1},
\]
which implies $\varphi_m(u,v)=(\bar{u},\bar{v})\in R_m$.

Note that $\varphi_m(\bar{u},\bar{v})=(u,v)$. To show $\varphi_m$ is a
surjection, we only need to show $(\bar{u},\bar{v})\in L_m$ for any $%
(u,v)\in R_m$. In fact, if $(u,v)\in R_m$, then
\[
f(\bar{u},\bar{v})=2M_{2^m-1}-f(u,v)+2<M_{2^m-1}+1.
\]
\end{proof}

\begin{lemma}
\label{lm5} For any $0\leq k<2^m$, $\psi_m|_{L(k)}$ is a bijection from $%
L(k) $ to $R(k)$.
\end{lemma}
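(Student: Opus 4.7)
The plan is to reduce the statement directly to equation (\ref{eqn:psi}) and the identity $M_{2^m+k}=M_k+h(m+1)$ from (\ref{lem1eqn31}), so the proof should be little more than a chain of inequalities. First I would recall from the preamble that $\psi_m$ is already known to be a well-defined injection on all of $\widetilde{\Sigma}^m$ (this was verified just after equation (\ref{eqn:psi})). Hence injectivity of $\psi_m|_{L(k)}$ is automatic, and the only work left is to identify its image with $R(k)$.

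For the forward inclusion $\psi_m(L(k))\subseteq R(k)$, I would take an arbitrary class $(u,v)\in L(k)$, apply (\ref{eqn:psi}) to get
\[
f(\psi_m(u,v))=f(0u,1v)=f(u,v)+h(m+1),
\]
and then add $h(m+1)$ throughout the inequality $M_k<f(u,v)<M_k+K$ defining $L(k)$. Using $M_k+h(m+1)=M_{2^m+k}$ from (\ref{lem1eqn31}), the outcome is exactly $M_{2^m+k}<f(0u,1v)<M_{2^m+k}+K$, which places $(0u,1v)$ in $R(k)$.

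For surjectivity onto $R(k)$, I would start with a representative $(0u,1v)\in R(k)$, which by the very definition of $R(k)$ already has the first coordinate beginning with $0$ and the second with $1$, so it lies in the image of $\psi_m$. Applying (\ref{eqn:psi}) in reverse and subtracting $h(m+1)$ from the defining inequality of $R(k)$ gives $M_k<f(u,v)<M_k+K$, i.e.\ $(u,v)\in L(k)$, and $\psi_m(u,v)=(0u,1v)$ by construction.

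There is essentially no obstacle here: the lemma is a direct translation lemma saying that $\psi_m$ carries the ``window'' of width $K$ above $M_k$ onto the corresponding window above $M_{2^m+k}$, and the quantity $K=\mu_k+\alpha_k+1$ never enters in any nontrivial way — only the fact that both windows have the same width matters. The only point one has to be a little careful about is that $\psi_m$ descends unambiguously to equivalence classes, but this was already checked via the equivalence $(u,v)\sim(u',v')\Longleftrightarrow(0u,1v)\sim(0u',1v')$ recorded after (\ref{eqn:psi}), so the argument transfers cleanly to $\widetilde{\Sigma}^m$ and $\widetilde{\Sigma}^{m+1}$.
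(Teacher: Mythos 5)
Your proof is correct and follows exactly the paper's argument: both rest on the translation identity $f(0u,1v)=f(u,v)+h(m+1)$ from (\ref{eqn:psi}) together with $M_{2^m+k}=M_k+h(m+1)$, combined with the previously established injectivity of $\psi_m$. You merely spell out the two inclusions that the paper compresses into a single ``if and only if'' chain of inequalities.
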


\begin{proof}
Note that $M_{2^m+k}=h(m+1)+M_k$. By (\ref{eqn:psi}), $(u,v)\in L(k)$ if and
only if
\[
M_{2^m+k}<f(0u,1v)=f(u,v)+h(m+1)<M_{2^m+k}+K.
\]
Thus $\psi_m|_{L(k)}$ is a bijection.
\end{proof}

\begin{remark}
By Lemma \ref{lm4} and Lemma \ref{lm5}, for $0\leq k<2^m$,
\[
\textrm{Card}~L_m=\textrm{Card}~R_m,~\textrm{Card}~L(k)=\textrm{Card}~R(k).
\]
\end{remark}


\section{Regularity of $S$}
Let $\mathbf{w}$ be a zero-one sequence beginning by $1$, and $S=\theta (\mathbf{w})$ be the
corresponding sum-free set. In this section, we will characterize $S$ using the properties of $S+S$.

\subsection{Elementary observations of $S$}

Now assume that $\mathbf{v}=(v_{n})_{n\geq 0}$ is the sequence generated from $%
S $ by (\ref{vn}). In fact, $\mathbf{v}$ labels all the natural numbers by $0
,1$ or $*$. And only those integers belonging to $S$ are labeled by $1$. When
we just care about the number of integers labeled by $0$ or $*$, we will use
``the number of $0$'s (or $*$'s)'' rather than ``the number of integers
labeled by $0$'s (or $*$'s)''. This abuse of language does not cause any
ambiguity in the paper.

\begin{definition}\label{deff}
For any $n\geq 1$, denote by $\mu_n$ (resp. $\alpha_n$), the number of $0$'s
(resp. $*$'s) between $S_{n-1}$ and $S_{n}$. In detail,
\begin{eqnarray*}
&\mu_n := \textrm{Card}~\{i\in\mathbb{N}: v_i=0, S_{n-1}<i<S_{n}\}; \\
&\alpha_n := \textrm{Card}~\{i\in\mathbb{N}: v_i=*, S_{n-1}<i<S_{n}\}.
\end{eqnarray*}
\end{definition}

\begin{remark}
$(1)$ For any $n\geq 1$,
\[
S_{n}-S_{n-1}=\mu_n+\alpha_n+1.
\]
Thus to study $S$, we need to study the properties of $(\mu_n)_{n \geq 1}$ and $%
(\alpha_n)_{n\geq 1}$.

$(2)$ Since $\mathbf{v}$ converts to $\mathbf{w}$ by deleting all the $*$'s, $%
\mu_n$ represents the number of $0$'s between the $n$-th and $(n+1)$-th `$1$%
' in $\mathbf{w}$, i.e.,
\[
\mathbf{w}=1\overbrace{0\cdots 0}^{\mu_{1}}1\overbrace{0\cdots 0} ^{\mu_{2}}1%
\overbrace{0\cdots 0}^{\mu_{3}}1\cdots.
\]
\end{remark}

\label{reg} During the proof of Theorem \ref{mainthm}, we need the following
two notation: for any $n\geq 1$,
\begin{eqnarray}
g(n)&:=&\mu_n+\alpha_n,  \label{gn} \\
h(n)&:=&\left(\sum_{i=1}^{2^{n-1}}g(i)\right)+2^{n-1}.  \label{hn}
\end{eqnarray}
Thus $S_{n}=S_{n-1}+g(n)+1$ and $h(n)$ is strictly increasing.

\begin{lemma}
\label{lem:hn} Suppose $g(2^k+i)=g(i)$ for $0\leq k<m, ~0<i<2^k$. Then
\[
h(m+1)=\sum_{i=1}^{m}h(i)+g(2^m)+1.
\]
\end{lemma}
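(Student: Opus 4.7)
The plan is to prove the identity by induction on $m$, using a self-similarity \emph{folding relation} as the engine. To keep the bookkeeping transparent, I will introduce the abbreviation $A_n := \sum_{i=1}^{2^n} g(i)$, so that the definition (\ref{hn}) of $h$ becomes
\[
h(n+1) = A_n + 2^n.
\]

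The first thing I would record is that the hypothesis $g(2^k+i)=g(i)$ for $0<i<2^k$ (applied at level $k-1$) allows splitting the top half of $A_k$ and reindexing:
\[
\sum_{i=2^{k-1}+1}^{2^k} g(i) \;=\; g(2^k)+\sum_{j=1}^{2^{k-1}-1} g(2^{k-1}+j) \;=\; g(2^k)+\sum_{j=1}^{2^{k-1}-1} g(j).
\]
Combined with $A_{k-1}=g(2^{k-1})+\sum_{j=1}^{2^{k-1}-1} g(j)$, this yields the folding recursion
\[
A_k \;=\; 2 A_{k-1} - g(2^{k-1}) + g(2^k),\qquad 1\leq k\leq m.
\]
This recursion is the workhorse of the whole argument.

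For the base case $m=1$, I would simply unfold both sides: $h(2)=g(1)+g(2)+2$ and $h(1)+g(2)+1=g(1)+1+g(2)+1$ agree, and the hypothesis is vacuous. For the inductive step, I assume the lemma for $m-1$, namely $h(m)=\sum_{i=1}^{m-1}h(i)+g(2^{m-1})+1$, and subtract this from the target identity for $m$; on the right this telescopes to $\sum_{i=1}^{m} h(i)-\sum_{i=1}^{m-1} h(i)=h(m)$, reducing the claim to
\[
h(m+1) \;=\; 2 h(m) + g(2^m) - g(2^{m-1}).
\]
Translating through $h(n+1)=A_n+2^n$, the $2^n$ contributions cancel and this becomes precisely $A_m = 2A_{m-1}+g(2^m)-g(2^{m-1})$, which is the folding recursion at $k=m$.

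I do not foresee a serious obstacle; the whole argument is a careful piece of bookkeeping once the folding relation is isolated. The one mildly delicate point is choosing the right reformulation so that the induction telescopes cleanly: if one instead tries to expand $h(m+1)$ and $\sum_{i=1}^{m}h(i)$ directly into double sums of $g$ and match them term by term, one is led to the counting identity $\sum_{i=1}^{2^m-1}g(i)=\sum_{i=1}^{2^{m-1}}(m-\lceil\log_2 i\rceil)g(i)$, which is genuinely messier than the two-line induction above.
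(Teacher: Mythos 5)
Your proof is correct and is essentially the paper's own argument: the paper derives the same relation $h(k+1)-h(k)=h(k)+g(2^k)-g(2^{k-1})$ by splitting $\sum_{i=1}^{2^k}g(i)$ into halves and reindexing via the hypothesis, then sums it from $k=1$ to $m$, which is exactly your induction in telescoped form.
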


\begin{proof}
By (\ref{hn}),
\begin{eqnarray*}
&&h(k+1)-h(k) \\
&=&\left(\sum_{i=1}^{2^k}g(i)+2^k\right)-h(k) \\
&=& \left(\sum_{i=1}^{2^{k-1}-1}\big(g(i)+g(2^{k-1}+i)\big)%
+g(2^{k-1})+g(2^k)+2^k\right)-h(k) \\
&=& h(k)+g(2^k)-g(2^{k-1}).
\end{eqnarray*}
Add the last equation up from $k=1$ to $m$.  The result follows.
\end{proof}


\begin{theorem}
\label{thm1} \label{mainthm} Let $\mathbf{c}:=1\overbrace{0\cdots0}^{\mu_1}1%
\overbrace{0\cdots0}^{\mu_2}1\cdots$. Denote its corresponding sum-free set by $%
S=(S_n)_{n\geq 0}$. If the sequence $(\mu_n)_{n\geq1}$ satisfies
 $(\ref{hm})$, then for every integer $n\geq 1$, we have

\begin{enumerate}
\item if $n=2^k(2j+1)$ for some $k,j\geq 0$, then
\begin{equation}  \label{alphan}
\alpha_n=\frac{3^{k}+1}{2};
\end{equation}

\item if $(n)_2=\epsilon_m\epsilon_{m-1}\cdots\epsilon_1$, then
\[
S_n=1+\sum_{i=1}^m\epsilon_ih(i).
\]
\end{enumerate}
\end{theorem}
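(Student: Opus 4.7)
The plan is to prove both statements simultaneously by strong induction on $n$, writing $n=2^m+k$ with $0\le k<2^m$ and splitting into two cases.

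\emph{Case A} $(0<k<2^m)$. The hypothesis $\mu_{2^m+k}=\mu_k$ already matches the zero counts, so the task reduces to $\alpha_{2^m+k}=\alpha_k$. The inductive value $S_{2^m+k-1}=S_{k-1}+h(m+1)$ makes the interval of interest $(S_{k-1}+h(m+1),\,S_k+h(m+1))$. I would first use $(\ref{hm})$ to show that $h(m+1)>2\sum_{i=1}^m h(i)$, which rules out sums $S_a+S_b$ with both indices $<2^m$ (they are too small) and those with both indices $\ge 2^m$ (too large), so every sum landing in this interval must have the form $S_i+S_{2^m+j}$ with $i,j<2^m$. Expressing such a sum through $f$ identifies the corresponding equivalence classes with $R(k-1)$, and Lemma \ref{lm5} then gives $|R(k-1)|=|L(k-1)|=\alpha_k$. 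The formula for $S_n$ follows from $S_n=S_{n-1}+\mu_n+\alpha_n+1$.

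\emph{Case B} $(k=0)$. Here the target is $\alpha_{2^m}=(3^m+1)/2$ and $S_{2^m}=1+h(m+1)$. Using Part (1) of the inductive hypothesis I would first compute
\[
\sum_{i=1}^{2^m-1}\alpha_i=\sum_{j=0}^{m-1}2^{m-1-j}\cdot\frac{3^j+1}{2}=\frac{3^m-1}{2}
\]
by grouping indices according to their $2$-adic valuation. Combining this with $(\ref{hm})$ and Lemma \ref{lem:hn} yields $h(m+1)>2\sum_{i=1}^m h(i)$. This inequality has two effects: every sum $S_a+S_b<S_{2^m}$ must use both indices $<2^m$, so the stars in $(S_{2^m-1},S_{2^m})$ are parametrized by classes in $\widetilde{\Sigma}^m$; and the map $(s_1,\dots,s_m)\mapsto\sum s_ih(i)$ is injective on $\{0,1,2\}^m$, so $|\widetilde{\Sigma}^m|=3^m$. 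The involution $\varphi_m$ of Lemma \ref{lm4} then pairs the classes with $f<M_{2^m-1}+1$ with those having $f>M_{2^m-1}+1$, leaving a unique fixed class at $f=M_{2^m-1}+1$ (realized, e.g., by any $(u,v)$ with $u_i+v_i=1$ for all $i$). Consequently the number of stars in $(S_{2^m-1},S_{2^m})$ is $|L_m|+1=(3^m-1)/2+1=(3^m+1)/2$, and Lemma \ref{lem:hn} gives $S_{2^m}=1+h(m+1)$.

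The main obstacle is the quantitative bookkeeping that converts $(\ref{hm})$ into the separation inequalities actually used, in particular $h(m+1)>2\sum_{i\le m}h(i)$ and the related bound ruling out sums with an index $\ge 2^m$ from the intervals in both cases. This is exactly where the constant $(3^m-1)/2$ in $(\ref{hm})$ is tailored: once the inductive value $\sum_{i<2^m}\alpha_i=(3^m-1)/2$ is substituted, $(\ref{hm})$ becomes just strong enough to secure these bounds via Lemma \ref{lem:hn}. Everything else then reduces to the bijection Lemmas \ref{lm4} and \ref{lm5} or to the recurrence $S_n=S_{n-1}+g(n)+1$.
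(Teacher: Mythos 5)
Your overall strategy is the paper's: double induction over $m$ and then over $k$, the separation inequality extracted from (\ref{hm}) via Lemma \ref{lem:hn}, and the bijections of Lemmas \ref{lm4} and \ref{lm5}. Your Case B is essentially the paper's treatment of $n=2^m$; your count $3^m=2\,\mathrm{Card}\,L_m+1$ (via injectivity of $(s_1,\dots,s_m)\mapsto\sum s_ih(i)$, which does follow from $h(i+1)>2\sum_{j\le i}h(j)$ at every level) is an equivalent substitute for the paper's identity $\mathrm{Card}\,\widetilde{L}_m=\sum_{i=1}^{2^m-1}\alpha_i=\frac{3^m-1}{2}$.

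Case A, however, has a genuine gap. Put $s:=S_k+h(m+1)$. Lemma \ref{lm5} only tells you that the sums $S_i+S_{2^m+j}$ with $i<2^m$ and $j\le k-1$ contribute exactly $\alpha_k$ stars to the open interval $(S_{2^m+k-1},s)$. To conclude $S_{2^m+k}=s$ (hence $\alpha_{2^m+k}=\alpha_k$ and the formula for $S_{2^m+k}$) you still need two facts that your write-up does not supply. First, sums $S_i+S_j$ with $j\ge 2^m+k$ are only excluded from the interval once you know $S_{2^m+k}\ge s$, which is part of what is being proved; your "too small/too large" dichotomy covers $i,j<2^m$ and $i,j\ge 2^m$ but not these. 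The paper closes this by a pigeonhole argument: if $S_{2^m+k}<s$, the $g(k)$ integers strictly between $S_{2^m+k-1}$ and $s$ would have to accommodate $\mu_k$ zeros, at least $\alpha_k$ stars and at least one $1$, one symbol too many. Second, you must show that $s$ itself is labelled $1$ and not $\ast$: a priori $s$ could equal some $S_i+S_j$ with $i<2^m\le j$, in which case $s=S_{i'}+S_{j'}+h(m+1)$ and hence $S_k=S_{i'}+S_{j'}$, contradicting sum-freeness of $S$. This last step is where the sum-free structure does real work, and "the formula for $S_n$ follows from $S_n=S_{n-1}+\mu_n+\alpha_n+1$" does not replace it, since $\alpha_{2^m+k}$ is precisely the quantity whose value is at stake.
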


\begin{proof}[Proof of Theorem \ref{thm1}]
We prove this theorem by induction on $n$. \newline
\textbf{Step1:} Since $\alpha_1=1,~S_1=\mu_1+3$, the conclusion is true for $%
n=1$. \newline
\textbf{Step2:} Assume the result is true for $n<2^m$; that is, if $%
n=2^k(2j+1)$, then $\alpha_n=\frac{3^k+1}{2}$. Thus $\alpha_{2^p+q}=%
\alpha_{q}, \forall~0<q<2^p, 0<p<m$. Moreover, if $(n)_2=\epsilon_m%
\epsilon_{m-1}\cdots\epsilon_1$, then $S_n=1+\sum_{i=1}^m\epsilon_ih(i)$.

Hence, it suffices to show that the result is also true for ~$2^m\leq
n<2^{m+1}$.

\textbf{Step2.1} Let $n=2^m$. By the induction hypothesis and (\ref{alphan}), we have
\begin{eqnarray}
\sum_{i=1}^{2^m-1}\alpha_i
&=& \sum_{i=1}^{2^{m-1}-1}\alpha_i+\alpha_{2^{m-1}}+\sum_{i=1}^{2^{m-1}-1}\alpha_{2^{m-1}+i}
=2\sum_{i=1}^{2^{m-1}-1}\alpha_i+\alpha_{2^{m-1}}\nonumber\\
&=&2^{2}\sum_{i=1}^{2^{m-2}-1}\alpha_i+2\alpha_{2^{m-2}}+\alpha_{2^{m-1}}
\cdots
=\sum_{i=1}^{m}2^{i-1}\alpha_{2^{m-i}}\nonumber\\
&=&\frac{3^{m}-1}{2}.\label{sum:alpha}
\end{eqnarray}

Now we will evaluate $\alpha_{2^m}$. By (\ref{sum:alpha}) and (\ref%
{hm}),
\begin{eqnarray*}
S_{2^m-1} &=& S_0+\sum_{i=1}^{2^m-1}(\mu_i+\alpha_i)+2^m-1 \\
&=& \sum_{i=1}^{2^m-1}\mu_i+2^m+\frac{3^m-1}{2}< \mu_{2^m}.
\end{eqnarray*}
Thus for $i,j\leq 2^m-1$,
\begin{equation}  \label{ss}
S_i+S_j\leq 2S_{2^m-1}<S_{2^m-1}+\mu_{2^m}.
\end{equation}
Thus to evaluate $\alpha_{2^m}$, we only need to count the number of $\ast$'s
between $S_{2^m-1}$ and $2S_{2^m-1}$. It is easy to see that $%
S_0+S_{2^m-1}=S_{2^m-1}+1$ is one of such $\ast$'s. Denote
\begin{eqnarray*}
\widetilde{L}_m &:=& \{x\in S+S:S_0<x<S_{2^m-1}\}, \\
\widetilde{R}_m &:=& \{x\in S+S:S_{2^m-1}+1<x<2S_{2^m-1}\}.
\end{eqnarray*}
Then
\[
\alpha_{2^m}=\textrm{Card}~\widetilde{R}_m+1.
\]
Since $f$ is an injection and $\widetilde{L}_{m}=f(L_m), ~\widetilde{R}%
_{m}=f(R_m)$, by Lemma \ref{lm4}, we have
\[
\textrm{Card}~\widetilde{L}_{m}=\textrm{Card}~\widetilde{R}_{m}.
\]
Hence,
\begin{eqnarray*}
\alpha_{2^m}&=&\textrm{Card}~\widetilde{L}_m+1 \\
&=&\sum_{i=1}^m2^{m-i}\alpha_{2^{i-1}}+1 \\
&=&\frac{3^m+1}{2}.
\end{eqnarray*}

Now we will give the expression of $S_{2^{m}}$. By (\ref{hm}) and (\ref%
{alphan}), for $0<j<m$ and $0<k<2^{j}$,
\[
g(2^{j}+k)=\alpha _{2^{j}+k}+\mu _{2^{j}+k}=\alpha _{k}+\mu _{k}=g(k).
\]%
Let $(2^{m})_{2}=1\overbrace{0\cdots 0}^{m}=:\epsilon _{m+1}(2^{m})\epsilon
_{m}(2^{m})\cdots \epsilon _{1}(2^{m})$. By Lemma \ref{lem:hn} and the
induction assumption, we have
\begin{eqnarray*}
S_{2^{m}} &=&S_{2^{m}-1}+g(2^{m})+1 \\
&=&\sum_{k=1}^{m}h(k)+g(2^{m})+2 \\
&=&h(m+1)+1 \\
&=&1+\sum_{k=1}^{m+1}\epsilon _{k}(2^{m})h(k).
\end{eqnarray*}%
\textbf{Step2.2:} Now we prove the results for $2^{m}<n<2^{m+1}$. Let $%
n=2^{m}+k$ where $1\leq k<2^{m}$, and we prove the results by induction on $k$.
For $1\leq k<2^{m}$, assume the result is true for $n<2^{m}+k$. We will
prove the results for $n=2^{m}+k$.

Firstly, we need to determine the value of $S_{2^{m}+k}$. Denote $\Xi%
_{2^{m}+k-1}:=\{x\in S:x\leq S_{2^{m}+k-1}\}$. Then
\[
\Xi_{2^{m}+k-1}+\Xi_{2^{m}+k-1}=\mathcal{S}_{1}\cup
\mathcal{S}_{2}\cup \mathcal{S}_{3},
\]%
where
\begin{eqnarray*}
\mathcal{S}_{1}:= &\{S_{i}+S_{j}:&i,j<2^{m}\}, \\
\mathcal{S}_{2}:= &\{S_{i}+S_{j}:&i<2^{m},2^m\leq j\leq 2^{m}+k-1\}, \\
\mathcal{S}_{3}:= &\{S_{i}+S_{j}:&2^m\leq i,j\leq 2^{m}+k-1\}.
\end{eqnarray*}

In fact, if $i,j<2^m$, then by (\ref{ss}) we have $S_{i}+S_{j}<S_{2^{m}}$.
Thus $\max \mathcal{S}%
_{1}<S_{2^{m}}$. Note that by the induction hypothesis, $S_{2^{m}+k-1}=h(m+1)+S_{k-1}$ and
$S_{2^{m}}=h(m+1)+1$. Since $(S_{n})_{n\geq 1}$ is strictly increasing, we have for $i,j\geq 2^{m}$,
\begin{eqnarray*}
S_{i}+S_{j} &> &S_{k}+S_{2^{m}}-1 \\
&=&(S_{k-1}+\mu _{k}+\alpha _{k}+1)+(h(m+1)+1)-1 \\
&=&S_{2^{m}+k-1}+\mu _{k}+\alpha _{k}+1=:s.
\end{eqnarray*}%
Therefore $\min \mathcal{S}_{3}> s$. Note that $s=h(m+1)+S_k$. Set $K=g(k)+1$,
then $S_k=S_{k-1}+K$
and $s=S_{2^m+k-1}+K$. Let
\begin{eqnarray*}
& &\widetilde{L}(k):=\{x\in S+S:S_{k-1}<x<S_{k}\}, \\
& &\widetilde{R}(k):=\{x\in S+S:S_{2^{m}+k-1}<x<s\}.
\end{eqnarray*}%
Clearly $f(L(k))=\widetilde{L}(k)$ and $f(R(k))\subset \widetilde{R}(k)$.
Now we will show that $f(R(k))=\widetilde{R}(k)$. Since $f$ is an injection,
by Lemma \ref{lm5}, we have
\[
\textrm{Card}~\widetilde{R}(k)\geq \textrm{Card}~\widetilde{L}(k)=\alpha _{k}.
\]

Suppose $S_{2^m+k}<s$. The way that we construct $S$ from a zero-one sequence implies that there are $\mu_{k}$ $0$'s, at least $\alpha_{k}$ $\ast$'s and at least one $1$ between $S_{2^m+k-1}$ and $s$. However there are only $g(k)$ integers between $S_{2^m+k-1}$ and $s$, which is a contradiction. Therefore $S_{2^m+k}\geq s$. In this case, $f(R(k))= \widetilde{R}(k)$. Hence
$$\textrm{Card}~\widetilde{R}(k)=\alpha _{k}.$$
And the number $s$ must be labeled by $\ast$ or $1$, i.e., $v_s=\ast \textrm{ or } 1$. Since $\max\mathcal{S}_1<S_{2^m}$ and $\min\mathcal{S}_3>s$, then $s\in\mathcal{S}_2$ if $v_s=\ast$. That is,
 $$\exists~ i<2^m,j\geq2^m, \textrm{ s.t., } s=S_i+S_j.$$
Then there exist $i^{\prime},j^{\prime}<2^m$ satisfying $i=0i^{\prime},j=1j^{\prime}$, and
\begin{eqnarray*}
  s=S_k+h(m+1)=S_i+S_j=S_{i^{\prime}}+S_{j^{\prime}}+h(m+1).
\end{eqnarray*}
Thus $S_k=S_{i^{\prime}}+S_{j^{\prime}}$ which contradicts the construction of $S_k$.

Therefore $v_s=1$ and $s=S_{2^m+k}$. Combining this fact with (\ref{hm}), $\alpha_k=\alpha_{2^m+k}$. The proof is completed.
\end{proof}

\begin{remark}
$(h(n))_{n\geq 1}$ is a numeration system (for detail, refer to \cite[Chapter 3]{jps}. In fact, by Lemma \ref{lem:hn},
for any $j\geq1$, $\sum_{i=1}^{j-1}h(i)<h(j)$.
\end{remark}

\begin{definition}[Allouche and Shallit \protect\cite{Jp}]
\label{def:regular} A sequence $(t(n))_{n\geq 0}$ is called \emph{$k$-regular} if
there exist $m$ subsequences of $n$, say $\{n_l^{(j)}\}_{l\geq0}$ $(0\leq j\leq
m-1)$, which satisfy for any $i\geq0$ and $0\leq b<k^i$, the subsequence $%
(t(k^in+b))_{n\geq 0}$ is a $\mathbb{Z}$-linear combination of $t(n_l^{(j)})$.
\end{definition}

\begin{lemma}[Allouche and Shallit \protect\cite{Jp}]
\label{lemmm} If $(u_{n})_{n\geq 1}$ and $(v_{n})_{n\geq 1}$ are both $k$%
-regular, then $(u_{n}+v_{n})_{n\geq 1}$ and $(\sum_{i=1}^{n}u_{i})_{n\geq
1} $ are also $k$-regular.
\end{lemma}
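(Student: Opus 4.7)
The plan is to use the equivalent module-theoretic characterization of $k$-regularity due to Allouche and Shallit: a sequence $(a_n)_{n\geq 0}$ is $k$-regular if and only if the $\mathbb{Z}$-module
\[
\mathcal{K}_k(a):=\Big\langle (a(k^{i}n+b))_{n\geq 0}\ :\ i\geq 0,\ 0\leq b<k^{i}\Big\rangle_{\mathbb{Z}}
\]
generated by its $k$-kernel is finitely generated. This matches Definition~\ref{def:regular}, because a finite generating family for the module provides precisely the common subsequences $(t(n_{\ell}^{(j)}))_{\ell}$ in terms of which every kernel element must be a $\mathbb{Z}$-linear combination.

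For the sum $u_{n}+v_{n}$, the identity
\[
\bigl((u+v)(k^{i}n+b)\bigr)_{n}=\bigl(u(k^{i}n+b)\bigr)_{n}+\bigl(v(k^{i}n+b)\bigr)_{n}
\]
shows immediately that $\mathcal{K}_k(u+v)\subseteq\mathcal{K}_k(u)+\mathcal{K}_k(v)$. The right-hand side is finitely generated (concatenate the two generating sets) and $\mathbb{Z}$ is Noetherian, so any submodule is again finitely generated; hence $(u_n+v_n)$ is $k$-regular.

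For the partial sum $s_{n}:=\sum_{i=1}^{n}u_{i}$, the plan is to control every kernel subsequence modulo a single finitely generated $\mathbb{Z}$-module. First split
\[
s(k^{i}n+b)=s(k^{i}n)+\sum_{j=1}^{b}u(k^{i}n+j),\qquad 0\leq b<k^{i},
\]
so the second summand already lies in the module generated by $\mathcal{K}_k(u)$ together with its shifts $(u(n+c))_{n}$; the latter are $k$-regular and each shift is expressible as a finite $\mathbb{Z}$-linear combination of elements of $\mathcal{K}_k(u)$ by a standard shift-closure lemma of Allouche and Shallit. It remains to handle $(s(k^{i}n))_{n}$ as $i$ varies. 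The telescoping identity
\[
s(k^{i}n)-s(k^{i}(n-1))=\sum_{j=1}^{k^{i}}u\bigl(k^{i}(n-1)+j\bigr)
\]
puts this difference back inside the same shifted $u$-module, and an induction on $i$ rewrites $(s(k^{i}n))_{n}$ as a $\mathbb{Z}$-linear combination of $(s_{n})_{n}$ and finitely many members of $\mathcal{K}_k(u)$ with constant shifts. Combining both steps, $\mathcal{K}_k(s)$ is contained in a finitely generated $\mathbb{Z}$-module, and the $k$-regularity of $(s_n)$ follows.

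The principal obstacle I foresee is the bookkeeping around the shifts $n\mapsto n+c$: one has to verify carefully that shifting preserves $k$-regularity and, crucially, that all shifted sequences still sit inside one fixed finitely generated module, so that the telescoping step does not generate unboundedly many new basis elements as $i$ grows. This is exactly the content of the shift-closure property for $k$-regular sequences, and once it is in place the telescoping recurrence closes the argument cleanly.
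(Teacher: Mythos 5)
The paper offers no proof of this lemma at all --- it is quoted verbatim from Allouche and Shallit --- so there is no internal argument to compare yours against; I can only judge your sketch on its own terms. Your treatment of the termwise sum is correct and is the standard argument: every kernel subsequence of $u+v$ is the sum of the corresponding kernel subsequences of $u$ and $v$, so the kernel module of $u+v$ sits inside $\mathcal{K}_k(u)+\mathcal{K}_k(v)$, which is finitely generated, and Noetherianity of $\mathbb{Z}$ finishes it. The decomposition $s(k^{i}n+b)=s(k^{i}n)+\sum_{j=1}^{b}u(k^{i}n+j)$ is also fine (indeed, for $1\leq j\leq b<k^{i}$ the sequences $(u(k^{i}n+j))_{n}$ are honest kernel elements, so no shift lemma is even needed there).

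The gap is in the handling of $(s(k^{i}n))_{n}$. You claim an induction on $i$ rewrites it as a $\mathbb{Z}$-linear combination of $(s_{n})_{n}$ and finitely many shifted members of $\mathcal{K}_k(u)$; that is false in general, and the telescoping identity cannot deliver it. Telescoping only controls the first difference $s(k^{i}n)-s(k^{i}(n-1))$; to recover $s(k^{i}n)$ you must sum over $m=1,\dots,n$, which reintroduces partial sums of kernel elements --- already $s(kn)-s(n)=\sum_{m=n+1}^{kn}u(m)$ involves unboundedly many terms and is not a fixed finite combination of the proposed generators. Shift-closure does not repair this, because the obstruction is not a shift but a summation. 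The missing idea (and the one in Allouche--Shallit) is to adjoin \emph{new} generators: take a finite generating set $a_{1},\dots,a_{d}$ of the module spanned by the $k$-kernel of $u$ and add their partial sums $B_{l}(n)=\sum_{m=0}^{n}a_{l}(m)$. Splitting the range $\{1,\dots,k^{i}n+b\}$ by residue modulo $k^{i}$ gives $s(k^{i}n+b)=\sum_{j\leq b}\sum_{m\leq n}u(k^{i}m+j)+\sum_{j>b}\sum_{m\leq n-1}u(k^{i}m+j)$ (up to boundary terms), and expanding each inner kernel sequence as $\sum_{l}c_{l}a_{l}$ collapses every inner sum to a combination of the $B_{l}(n)$ and $a_{l}(n)$, using $B_{l}(n-1)=B_{l}(n)-a_{l}(n)$. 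Thus $\mathcal{K}_k(s)$ lies in the finitely generated module $\langle a_{1},\dots,a_{d},B_{1},\dots,B_{d}\rangle$, and Noetherianity concludes; note one never needs the $B_{l}$ to lie in the kernel module of $s$, only containment in some finitely generated module.
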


\begin{theorem}
\label{thm:regular}If $(\mu _{n})_{n\geq 1}$ is $2$-regular and
(\ref{hm}) holds, then the sequence $(S_{n})_{n\geq 0}$ is $2$%
-regular.
\end{theorem}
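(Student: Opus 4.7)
The plan is to combine Theorem \ref{thm1} with the closure properties of 2-regular sequences recorded in Lemma \ref{lemmm}. Theorem \ref{thm1} already expresses $S_n$ explicitly through $\mu_n$ and $\alpha_n$, so regularity of $(S_n)$ reduces to regularity of these two ingredients together with some elementary arithmetic operations that preserve 2-regularity.

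First I would establish that $(\alpha_n)_{n\geq 1}$ is 2-regular. By Theorem \ref{thm1}(1), $\alpha_n = (3^{v_2(n)}+1)/2$, where $v_2(n)$ denotes the 2-adic valuation of $n$. From this one immediately reads off $\alpha_{2n+1}=1$ and $\alpha_{2n}=3\alpha_n - 1$. More generally, for any $i\geq 1$ and $0\leq b < 2^i$, a case split on whether $b=0$ shows that the subsequence $(\alpha_{2^i n+b})_n$ is either a constant (when $b\neq 0$, since then $v_2(b)<i$ forces $v_2(2^i n+b)=v_2(b)$) or equals $3^i\alpha_n - (3^i-1)/2$ (when $b=0$). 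Every such subsequence therefore lies in the finitely generated $\mathbb{Z}$-module spanned by $(\alpha_n)_n$ and the constant sequence $1$, which is exactly the 2-regularity of $(\alpha_n)$.

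Next, since $(\mu_n)$ is 2-regular by hypothesis, Lemma \ref{lemmm} yields that $g(n)=\mu_n+\alpha_n$ is 2-regular, and a second application gives that the partial sums $T_n := \sum_{i=1}^n g(i)$ are 2-regular. Telescoping $S_n - S_{n-1} = g(n)+1$ with $S_0 = 1$ yields $S_n = n+1+T_n$. Since $(n+1)_{n\geq 0}$ is manifestly 2-regular (its subsequences $2^i n + (b+1)$ lie in the module generated by $n$ and $1$), one further application of Lemma \ref{lemmm} concludes that $(S_n)_{n\geq 0}$ is 2-regular.

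The only substantive step is the 2-regularity of $(\alpha_n)$; the rest is routine bookkeeping with Lemma \ref{lemmm}. I do not anticipate any real obstacle here, since the explicit formula in Theorem \ref{thm1}(1) does all the heavy lifting, reducing the whole argument to a straightforward verification that a handful of sequences span a finitely generated $\mathbb{Z}$-module under the 2-kernel operation.
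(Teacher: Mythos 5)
Your proposal is correct and follows essentially the same route as the paper: both derive the recursions $\alpha_{2n}=3\alpha_n-1$, $\alpha_{2n+1}=1$ from Theorem \ref{thm1}(1) to get $2$-regularity of $(\alpha_n)$, and then assemble $S_n=\sum_{i=1}^n\mu_i+\sum_{i=1}^n\alpha_i+(n+1)$ via the closure properties in Lemma \ref{lemmm}. Your explicit verification of the $2$-kernel of $(\alpha_n)$ is slightly more detailed than the paper's, but the argument is the same.
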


\begin{proof}
Since $(\mu _{n})_{n\geq 1}$ is $2$-regular, by Lemma \ref{lemmm}, so is $%
(\sum_{i=1}^{n}\mu _{i})_{n\geq 1}$. And (\ref{alphan}) in Theorem %
\ref{thm1} implies that
\[
\left\{
\begin{array}{l}
\alpha _{2n}=3\alpha _{n}-1, \\
\alpha _{2n+1}=1.%
\end{array}%
\right.
\]%
Thus $(\alpha _{n})_{n\geq 1}$ is $2$-regular. By Lemma \ref{lemmm}, $(\sum_{i=1}^{n}\alpha _{i})_{n\geq 1}$ is $2$-regular. Note that $(n+1)_{n\geq 1}$ is
also $2$-regular, so by Lemma \ref{lemmm},
\[
S_{n}=\sum_{i=1}^{n}\mu _{i}+\sum_{i=1}^{n}\alpha _{i}+(n+1)
\]%
is $2$-regular.
\end{proof}

\section{Examples}

Let $\Sigma_2^{\ast }$ be the set of finite words over alphabet $\{0,1\}$.
For any $w\in \Sigma_2^{\ast }$, the length of $w$ is denoted by $|w|$. Denote by $|w|_{0}$ and $|w|_{1}$ the number of $0$'s and $1$'s in $w$ respectively.

The following lemma completely characterizes the gap between two adjacent $1$%
's in the Cantor-like sequence $\mathbf{c}$.

\begin{lemma}
\label{lemma1} For any $l_{1}\geq 0,~l_{2}\geq 0,~l_{3}\geq 3$ and all $%
n\geq 1$,
\begin{equation}
\mu _{n}=\frac{l_{2}(l_{3}^{k}-1)}{l_{3}-1}+l_{1}l_{3}^{k}  \label{mun}
\end{equation}%
where $n=2^{k}(2j+1)$ for some $k,j\geq 0$.
\end{lemma}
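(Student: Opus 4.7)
The plan is to extract a simple recurrence for $\mu _{n}$ from the fact that $\mathbf{c}$ is the unique fixed point of the substitution $\sigma =\sigma (l_{1},l_{2},l_{3})$, and then to unfold that recurrence by induction on the $2$-adic valuation $k$ of $n$.

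First I would establish the two-step recurrence
\begin{equation*}
\mu _{2n-1}=l_{1},\qquad \mu _{2n}=l_{2}+l_{3}\,\mu _{n}\qquad (n\geq 1).
\end{equation*}
Since $\sigma (\mathbf{c})=\mathbf{c}$, and since each $1$ in $\mathbf{c}$ is replaced by the block $1\,0^{l_{1}}\,1\,0^{l_{2}}$ (containing two $1$'s) while each $0$ is replaced by $0^{l_{3}}$, the $n$-th $1$ of $\mathbf{c}$ produces, in $\sigma (\mathbf{c})$, exactly the $(2n-1)$-th and $(2n)$-th $1$'s of $\mathbf{c}$, separated by the $l_{1}$ internal zeros of $\sigma (1)$; this is the first identity. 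For the second, the block between the $(2n)$-th and $(2n+1)$-th $1$'s consists of the $l_{2}$ trailing zeros of $\sigma (1)$ applied to the $n$-th $1$, followed by the $\mu _{n}$ zeros lying between the $n$-th and $(n+1)$-th $1$'s of $\mathbf{c}$ (each expanded by $\sigma $ into $l_{3}$ zeros), after which one reaches the leading $1$ of $\sigma (1)$ applied to the $(n+1)$-th $1$, giving a total of $l_{2}+l_{3}\mu _{n}$ zeros.

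Having this recurrence in hand, I would induct on $k$ in the decomposition $n=2^{k}(2j+1)$. The base case $k=0$ corresponds to an odd index $n=2(j+1)-1$, so $\mu _{n}=l_{1}$ by the first recurrence, which agrees with the right-hand side of $(\ref{mun})$ at $k=0$. For $k\geq 1$, I would write $n=2m$ with $m=2^{k-1}(2j+1)$, apply $\mu _{n}=l_{2}+l_{3}\mu _{m}$, insert the inductive formula for $\mu _{m}$, and collapse the resulting geometric sum to recover $\frac{l_{2}(l_{3}^{k}-1)}{l_{3}-1}+l_{1}l_{3}^{k}$.

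The only non-mechanical step is the block-by-block bookkeeping that yields $\mu _{2n}=l_{2}+l_{3}\mu _{n}$, because one must correctly account for the trailing zeros of one $\sigma (1)$-block, the expansion of the intervening zero-block of length $\mu _{n}$, and the leading $1$ of the next $\sigma (1)$-block. Once the substitutive structure is tracked carefully, the remainder of the argument is a routine unfolding of the recurrence.
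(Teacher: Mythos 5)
Your proposal is correct, and it takes a genuinely different route from the paper. The paper proves the lemma by induction on $n$ over the dyadic ranges $2^m\leq n<2^{m+1}$, expanding the prefix $\sigma^{m+1}(1)=\sigma^{m}(1)\,(\sigma^{m}(0))^{l_1}\,\sigma^{m}(1)\,(\sigma^{m}(0))^{l_2}$ explicitly; from this global picture it reads off $\mu_{2^m}=\sum_{i=0}^{m-1}l_2l_3^{i}+l_1l_3^{m}$ as a geometric sum and obtains the shift identity $\mu_{2^m+i}=\mu_i$ for $0<i<2^m$, which together give the closed form. You instead extract the local two-block recurrence $\mu_{2n-1}=l_1$, $\mu_{2n}=l_2+l_3\mu_n$ directly from the fixed-point equation $\sigma(\mathbf{c})=\mathbf{c}$ (the counting of the two $1$'s produced by each $\sigma(1)$-block and of the $l_2+l_3\mu_n$ intervening zeros is exactly right) and then unfold it along the $2$-adic valuation of $n$. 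Notably, the recurrence you prove is precisely equation (\ref{mu:rec}), which the paper only derives \emph{afterwards}, as a consequence of Lemma \ref{lemma1}, in the proof of Theorem \ref{thm2}; your argument reverses that logical order and is arguably the more standard and economical treatment of a substitution fixed point. The one thing the paper's version hands you for free is the identity $\mu_{2^m+i}=\mu_i$ (its equation (\ref{lem1eqn3})), which is cited later to verify the second condition of (\ref{hm}); you should note that this also follows immediately from your closed form, since for $0<i<2^m$ the indices $i$ and $2^m+i$ have the same $2$-adic valuation.
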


\begin{proof}
Let $\sigma:=\sigma(l_1,l_2,l_3)$. It is easy to see that $%
|\sigma^m(1)|_1=2^m $ and $|\sigma^m(0)|=|\sigma^m(0)|_0=l_3^m$ for any $%
m\geq 0$.

Now, we prove this result by induction on $n$. It is clear that $\mu_1=l_1$.
Assume that the result is true for all $n<2^m$; we prove it for $2^m\leq
n<2^{m+1} $. Since $\sigma^{\infty}(1)$ begins with $\sigma^{m+1}(1)$ and $%
|\sigma^{m+1}(1)|_1=2^{m+1}$, in order to evaluate $\mu_{n}$ for $2^m\leq
n<2^{m+1}$, we only need to investigate $\sigma^{m+1}(1)$. Note that $%
\sigma^{m}(1)$ begins with `$1$' and
\begin{eqnarray}
\sigma^{m+1}(1) &=& \sigma^m(\sigma(1))=\sigma^m(1\overbrace{0\cdots 0}%
^{l_1}1\overbrace{0\cdots 0}^{l_2})  \nonumber \\
&=&\sigma^m(1)\overbrace{\sigma^m(0)\cdots \sigma^m(0)}^{l_1}\sigma^m(1)%
\overbrace{\sigma^m(0)\cdots \sigma^m(0)}^{l_2}  \nonumber \\
&=&\sigma^{m-1}(1\overbrace{0\cdots 0}^{l_1}1\overbrace{0\cdots 0}^{l_2})%
\overbrace{\sigma^m(0)\cdots \sigma^m(0)}^{l_1}\sigma^m(1)\overbrace{%
\sigma^m(0)\cdots \sigma^m(0)}^{l_2}  \nonumber \\
&=&\sigma^{m-1}(1)\overbrace{\sigma^{m-1}(0)\cdots\sigma^{m-1}(0)}%
^{l_1}\sigma^{m-1}(1) \overbrace{\sigma^{m-1}(0)\cdots\sigma^{m-1}(0)}^{l_2}
\nonumber \\
& &\overbrace{\sigma^m(0)\cdots \sigma^m(0)}^{l_1}\sigma^m(1)\overbrace{%
\sigma^m(0)\cdots \sigma^m(0)}^{l_2};  \label{iteration_m}
\end{eqnarray}
then we have
\begin{equation}
\mu_{2^m}=\sum_{i=0}^{m-1}l_2l_3^i+l_1l_3^m=\frac{l_2(l_3^m-1)}{l_3-1}%
+l_1l_3^m.  \label{lem1eqn1}
\end{equation}
From (\ref{iteration_m}) and the fact $|\sigma^{m+1}(1)|_1=2^{m+1}$%
, we know that the block of consecutive zeros between $n$-th and $(n+1)$-th
`1' will appear in the second $\sigma^m(1)$ of $\sigma^{m+1}(1)$ and for all
$0<i<2^m$,
\begin{equation}
\mu_{2^m+i}=\mu_i.  \label{lem1eqn3}
\end{equation}
Since for any $i=2^k(2j+1)$ where $0<k<m$, we have $2^m+i=2^k(2j^{\prime}+1)$%
. Equation (\ref{lem1eqn3}) implies that for all $0<i<2^m$,
\begin{equation}
\mu_{2^m+i}=\mu_i=\frac{l_2(l_3^k-1)}{l_3-1}+l_1l_3^k.  \label{lem1eqn2}
\end{equation}
Therefore the result follows from  (\ref{lem1eqn1}) and  (%
\ref{lem1eqn2}).
\end{proof}



\begin{theorem}
\label{thm2} Let $\mathbf{c}$ be the Cantor-like
sequence satisfying  
$7l_{3}\geq 4(l_{1}+l_{2})+17$ and $l_{1}(l_{3}-1)+l_{2}>3$
, and $S$ be the sum-free set corresponding to $\mathbf{c}$. Then the sequence $(S_{n})_{n\geq 0}$ is $2$%
-regular.
\end{theorem}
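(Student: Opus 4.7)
The plan is to reduce to Theorem \ref{thm:regular}, whose hypotheses are that $(\mu_n)_{n\geq 1}$ be $2$-regular and that the system (\ref{hm}) hold. I would verify each in turn for the Cantor-like sequence $\mathbf{c}$.

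For the $2$-regularity of $(\mu_n)$, Lemma \ref{lemma1} gives the closed form $\mu_n = l_1 l_3^k + \tfrac{l_2(l_3^k-1)}{l_3-1}$ where $k$ is the $2$-adic valuation of $n$. This immediately yields $\mu_{2n+1} = l_1$ and $\mu_{2n} = l_3\mu_n + l_2$, which exhibits $(\mu_n)$ as a $2$-regular sequence: the $\mathbb{Z}$-module spanned by its $2$-kernel is generated by $(\mu_n)_{n\geq 1}$ together with the constant sequence $1$.

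The second half of (\ref{hm}), namely $\mu_{2^m+k} = \mu_k$ for $0 < k < 2^m$, is exactly (\ref{lem1eqn3}) from the proof of Lemma \ref{lemma1}. For the remaining inequality
\[
\mu_{2^m} \;>\; \sum_{i=1}^{2^m-1}\mu_i + 2^m + \frac{3^m-1}{2},
\]
I would first count, for each $0 \leq k \leq m-1$, the $2^{m-k-1}$ integers $i\in[1,2^m-1]$ whose $2$-adic valuation equals $k$. Writing $\mu_n = A\, l_3^{k} + B$ with $k$ the $2$-adic valuation of $n$, $A = l_1 + \tfrac{l_2}{l_3-1}$ and $B = -\tfrac{l_2}{l_3-1}$, a direct geometric summation yields
\[
\sum_{i=1}^{2^m-1}\mu_i \;=\; \frac{A(l_3^m-2^m)}{l_3-2} + B(2^m-1).
\]
After substitution, the required inequality becomes a polynomial comparison whose dominant terms are $A\cdot\tfrac{l_3-3}{l_3-2}\cdot l_3^m$ on the left and $\tfrac{3^m}{2}$ on the right.

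Verifying this polynomial inequality uniformly in $m\geq 1$ is where I expect the main obstacle to lie. The two stated hypotheses are already incompatible at $l_3=3$ (one checks this by case distinction on $l_1\leq 1$), so in fact $l_3\geq 4$; combined with $l_1(l_3-1)+l_2 > 3$ this forces $A > \tfrac{3}{l_3-1}$, which is strong enough to make the leading $l_3^m$ term dominate $\tfrac{3^m}{2}$ for every $m\geq 1$ (since $l_3\geq 4 > 3$). The sharper inequality $7l_3 \geq 4(l_1+l_2)+17$ is then tailored to absorb the remaining lower-order $O(2^m)$ and constant contributions, with the base case $m=1$ being the tightest one and the source of the constant $17$. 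Once (\ref{hm}) is established, Theorem \ref{thm:regular} yields the $2$-regularity of $(S_n)_{n\geq 0}$.
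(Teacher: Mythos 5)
Your reduction to Theorem \ref{thm:regular}, the derivation of $\mu_{2n}=l_3\mu_n+l_2$ and $\mu_{2n+1}=l_1$ from Lemma \ref{lemma1}, and the observation that the second line of (\ref{hm}) is exactly (\ref{lem1eqn3}) all coincide with the paper. The divergence is in the first line of (\ref{hm}): the paper proves it by induction on $m$, combining $\mu_{2^{m+1}}=l_3\mu_{2^m}+l_2$ with the partial-sum identity (\ref{sum:mu}), whereas you evaluate $\sum_{i=1}^{2^m-1}\mu_i$ in closed form and compare directly. Your closed form is correct (it agrees with the expression the paper itself derives later, in the proof of Corollary \ref{cor:thue}), and the direct route is viable and arguably more transparent than the induction.

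However, the decisive step is left as a sketch, and the sketch contains a wrong turn. Writing $A=l_1+\tfrac{l_2}{l_3-1}$, the inequality to be proved reduces to
\[
A\,\frac{(l_3-3)l_3^m+2^m}{l_3-2}+\frac{l_2(2^m-2)}{l_3-1}\;>\;2^m+\frac{3^m-1}{2},
\]
whose left-hand side is nondecreasing in $l_1$ and $l_2$ while the right-hand side does not involve them. Consequently an \emph{upper} bound on $l_1+l_2$ such as $7l_3\geq 4(l_1+l_2)+17$ cannot be what ``absorbs'' the lower-order terms; in your route its only function is, jointly with $l_1(l_3-1)+l_2>3$, to force $l_3\geq 4$, which is where the genuine obstruction lies: for $l_3=3$ the coefficient of $l_3^m$ vanishes and (\ref{hm}) actually fails (e.g.\ $(l_1,l_2,l_3)=(2,0,3)$ satisfies $l_1(l_3-1)+l_2>3$ yet gives $\mu_4=18=\mu_1+\mu_2+\mu_3+4+4$ at $m=2$). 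Nor is the constant $17$ produced by the base case $m=1$; it is an artifact of the paper's induction, where it is precisely the condition $l_3-l_1-l_2\geq 2-\tfrac{3(l_3-3)}{4}$ needed to push the step from $m$ to $m+1$. What you must actually prove is that $l_3\geq 4$ and $l_1(l_3-1)+l_2\geq 4$ (hence $A\geq\tfrac{4}{l_3-1}$) imply the displayed inequality for all $m\geq 1$. This is true, but it is not a consequence of ``the leading term dominates'': at $m=1$ with $A=\tfrac{4}{l_3-1}$ the left-hand side equals exactly $4$ against a right-hand side of $3$, so the margin is a single unit and the cases $m=1$ and $m\geq 2$ have to be written out. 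As it stands, the key inequality is asserted rather than proved.
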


\begin{proof}
According to Theorem \ref{thm:regular}, we only need to show that the gap
sequence $(\mu _{n})_{n\geq 1}$ of $\mathbf{c}$ is $2$-regular and
satisfies (\ref{hm}). By Lemma \ref{lemma1}, we have
\begin{equation}
\left\{
\begin{array}{l}
\mu _{2n}=l_{3}\mu _{n}+l_{2}, \\
\mu _{2n+1}=l_{1},%
\end{array}%
\right.\label{mu:rec}
\end{equation}%
which implies $(\mu _{n})_{n\geq 1}$ is $2$-regular. 

Now we will show that $(\mu _{n})_{n\geq 1}$ satisfies (\ref{hm}). By (\ref{mu:rec}), for all $m\geq 1$, 
\begin{eqnarray}
\sum_{i=1}^{2^m-1}\mu_i
&=& \sum_{i=0}^{2^{m-1}-1}\mu_{2i+1}+\sum_{i=1}^{2^{m-1}-1}\mu_{2i}\nonumber\\
&=&2^{m-1}l_{1}+l_{3}\sum_{i=1}^{2^{m-1}-1}\mu_{i}+(2^{m-1}-1)l_{2}.\nonumber
\end{eqnarray}
Thus
\begin{equation}
l_{3}\sum_{i=1}^{2^{m-1}-1}\mu_{i}=\sum_{i=1}^{2^m-1}\mu_i-2^{m-1}(l_{1}+l_{2})+l_{2}.\label{sum:mu}
\end{equation}

By (\ref{lem1eqn3}), we know that $(\mu _{n})_{n\geq 1}$ satisfies the second equation of (\ref{hm}). We will show the first equation of (\ref{hm}) by induction. Since $\mu_{2}=l_{3}l_{1}+l_{2}>l_{1}+3=\mu_{1}+2+\frac{3-1}{2}$, the first equation of (\ref{hm}) holds. Now suppose $\mu _{2^{m}}>\sum_{i=1}^{2^{m}-1}\mu _{i}+2^{m}+\frac{3^{m}-1}{2}$. Then by (\ref{mu:rec}) and (\ref{sum:mu}), we have 
\begin{eqnarray}
\mu_{2^{m+1}} &=& l_{3}\mu_{2^{m}}+l_{2}\nonumber\\
&\geq& l_{3}(\sum_{i=1}^{2^{m}-1}\mu _{i}+2^{m}+\frac{3^{m}-1}{2}+1)+l_{2}\nonumber\\
&=& \sum_{i=1}^{2^{m+1}-1}\mu_{i}-2^{m}(l_{1}+l_{2})+l_{3}(2^{m}+\frac{3^{m}+1}{2})+2l_{2}.\label{eqn:thm3:1}
\end{eqnarray}
Since $7l_{3}\geq 4(l_{1}+l_{2})+17$, we have $l_{3}-l_{1}-l_{2}\geq 2-\frac{3(l_{3}-3)}{4}$. Then
\begin{eqnarray}
& & 2^{m}(l_{3}-l_{1}-l_{2})+l_{3}\frac{3^{m}+1}{2}\nonumber\\
&>& 2^{m}\left(2-\frac{3(l_{3}-3)}{4}\right)+l_{3}\frac{3^{m}}{2}\nonumber\\
&\geq& 2^{m+1}+3^{m}\left(\frac{l_{3}}{2}-\frac{3}{4}\left(\frac{2}{3}\right)^{m}(l_{3}-3)\right)\nonumber\\
&\geq& 2^{m+1}+3^{m}\left(\frac{l_{3}}{2}-\frac{1}{2}(l_{3}-3)\right)=2^{m+1}+\frac{3^{m+1}}{2}.\label{eqn:thm3:2}
\end{eqnarray}
By (\ref{eqn:thm3:1}) and (\ref{eqn:thm3:2}), we know that $(\mu _{n})_{n\geq 1}$ satisfies (\ref{hm}).
\end{proof}

In the following, we give two examples to illustrate Theorem \ref{thm2}.
Example \ref{eg:const} gives a sum-free set
corresponding to a zero-one sequence generated by a substitution of constant length.
Example \ref{eg:nonconst} gives a sum-free set corresponding to a sequence generated
by a substitution of non-constant length.
\begin{example}\label{eg:const}
Take $\sigma:=\sigma(3,0,5)$, that is, $\sigma:~1\rightarrow 10001,~0\rightarrow
00000$. The sum-free set corresponding to
$\sigma^{\infty}(1)=100010000000000000001000100000\cdots$
is
$$S=\{1,6,24,29,110,115,133,138,528,533,551,556,637,642,660,665,\cdots\}.$$
According to Theorem \ref{thm2}, $S$ is $2$-regular.
\end{example}

\begin{example}\label{eg:nonconst}
Take $\sigma:=\sigma(1,1,5)$, that is $\sigma:~1\rightarrow 1010,~0\rightarrow
00000$. The sum-free set corresponding to
$$\sigma^{\infty}(1)=10100000010100000000000000000000000000000001010101000000\cdots$$
is
$$S=\{1, 3, 15, 17, 69, 71, 83, 85, 333, 337, 349, 353, 415, 417, 431, 435,\cdots\}.$$
According to Theorem \ref{thm2},
$S$ is $2$-regular.
\end{example}

From Theorem \ref{thm2}, we have the following two results.
In the following, the symbol $\equiv $ means equality modulo $2$.
\begin{cor}\label{cor:thue}
Let $(S_{n})_{n\geq 0}$ be the sum-free set in Theorem \ref{thm2}. For $%
0\leq j\leq 3$, the subsequence $(S_{4n+j})_{n\geq 0}$ is either periodic or
the Thue-Morse sequence up to a coding.
\end{cor}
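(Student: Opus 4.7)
The plan is to compute $S_{4n+j}\pmod 2$ directly from the explicit formula in Theorem~\ref{thm1} and then to reduce the statement to the parity of the auxiliary sequence $(h(i))_{i\geq 1}$.

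First, for fixed $j\in\{0,1,2,3\}$, write $j=2j_2+j_1$ with $j_1,j_2\in\{0,1\}$. If $(n)_2=\eta_q\cdots\eta_1$ is the binary expansion of $n$, then the two lowest bits of $4n+j$ encode $j$ while the remaining bits encode $n$, so Theorem~\ref{thm1}(2) yields
\[
S_{4n+j}\equiv c_j+\sum_{i\geq 1}\eta_i\,h(i+2)\pmod 2,
\]
where $c_j:=1+j_1h(1)+j_2h(2)\pmod 2$ depends only on $j$. Hence the behaviour of $(S_{4n+j}\bmod 2)_{n\geq 0}$ is governed entirely by the parities of $h(i)$ for $i\geq 3$.

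Next I would show that $h(i)\bmod 2$ is constant for $i\geq 3$. The proof of Lemma~\ref{lem:hn} gives the recurrence $h(k+1)=2h(k)+g(2^k)-g(2^{k-1})$ for $k\geq 1$, which modulo $2$ becomes
\[
h(k+1)\equiv g(2^k)+g(2^{k-1})\pmod 2.
\]
Since $\alpha_{2^k}+\alpha_{2^{k-1}}=2\cdot 3^{k-1}+1$ is odd for $k\geq 1$, this simplifies to $h(k+1)\equiv \mu_{2^k}+\mu_{2^{k-1}}+1\pmod 2$. A short case analysis using $\mu_{2^k}=l_2(l_3^{k-1}+\cdots+1)+l_1l_3^k$ from Lemma~\ref{lemma1} then shows: if $l_3$ is even then $\mu_{2^k}\equiv l_2\pmod 2$ for all $k\geq 1$, and hence $h(i)\equiv 1\pmod 2$ for $i\geq 3$; if $l_3$ is odd then $\mu_{2^k}\equiv l_2k+l_1\pmod 2$, and hence $h(i)\equiv l_2+1\pmod 2$ for $i\geq 2$.

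Assembling these facts, if $l_3$ is even, or if $l_3$ is odd and $l_2$ is even, then $h(i+2)\equiv 1\pmod 2$ for every $i\geq 1$, so $\sum_{i\geq 1}\eta_i h(i+2)\equiv t(n)\pmod 2$ (the Thue--Morse value) and $(S_{4n+j}\bmod 2)_{n\geq 0}$ is the Thue--Morse sequence up to the coding $x\mapsto x+c_j$. In the remaining case, with $l_3$ and $l_2$ both odd, every $h(i+2)$ vanishes modulo $2$, the variable sum collapses, and $S_{4n+j}\equiv c_j\pmod 2$ is constant in $n$, hence periodic. The main delicate point is the bookkeeping of the low indices $h(1)$ and $h(2)$, whose parities may differ from those of $h(i)$ for $i\geq 3$; they contribute only to the constant shift $c_j$ and therefore do not disturb the asymptotic pattern.
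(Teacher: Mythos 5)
Your argument is correct and reaches the same final dichotomy as the paper (Thue--Morse up to a coding when $l_2l_3$ is even, constant modulo $2$ when $l_2$ and $l_3$ are both odd). The overall skeleton is the same as the paper's: both proofs start from $S_{4n+j}=1+\sum_i\epsilon_i h(i+2)+j_2h(2)+j_1h(1)$, observe that $\sum_i\epsilon_i\bmod 2$ is the Thue--Morse sequence, and thereby reduce everything to the parity of $h(i)$ for $i\geq 3$. Where you diverge is in how that parity is computed. The paper first derives a closed-form expression
\[
h(n)=\frac{(l_1+l_2)(l_3^{n-1}-2^{n-1})}{l_3-2}+l_1l_3^{n-1}+3^{n-1}+2^{n-1}
\]
by explicitly summing $\sum\mu_i$ and $\sum\alpha_i$, and then reduces it modulo $2$ using $\frac{l_3^{n-1}-2^{n-1}}{l_3-2}\equiv l_3$ for $n\geq 3$, obtaining $h(n)\equiv 1+l_2l_3$. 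You instead work directly with the recurrence $h(k+1)=2h(k)+g(2^k)-g(2^{k-1})$ from Lemma~\ref{lem:hn}, note that $\alpha_{2^k}+\alpha_{2^{k-1}}=2\cdot 3^{k-1}+1$ is odd, and finish with a parity case analysis on $\mu_{2^k}$ from Lemma~\ref{lemma1}. Your route is more elementary (no closed form for $h$ is needed, only its value modulo $2$) and your two cases $h(i)\equiv 1$ versus $h(i)\equiv l_2+1$ for large $i$ are exactly the statement $h(i)\equiv 1+l_2l_3$ split according to the parity of $l_3$; the paper's route costs a longer computation but yields the explicit formula (\ref{eqn:hn}) for $h(n)$, which it then reuses in the subsequent corollary on sequences of Cantor type. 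Your handling of the low-index terms $h(1),h(2)$ as a constant shift $c_j$ absorbed into the coding is also correct and matches the paper.
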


\begin{proof}
Recall that by (\ref{sum:alpha}), we have 
$$\sum_{i=1}^{2^n-1}\alpha_i=\frac{3^{n}-1}{2}.$$ Using (\ref{sum:mu}) several times, we have 
\begin{eqnarray*}
\sum_{i=1}^{2^{n}-1}\mu_{i}&=&l_{3}\sum_{i=1}^{2^{n-1}-1}\mu_{i}+[2^{n-1}(l_{1}+l_{2})-l_{2}]\\
&=&l_{3}^{2}\sum_{i=1}^{2^{n-2}-1}\mu_{i}+l_{3}[2^{n-2}(l_{1}+l_{2})-l_{2}]+[2^{n-1}(l_{1}+l_{2})-l_{2}]\\
&=&\cdots \\
&=&\sum_{i=1}^{n}l_{3}^{i-1}[2^{n-i}(l_{1}+l_{2})-l_{2}]\\
&=&\frac{l_{1}+l_{2}}{l_{3}-2}(l_{3}^{n}-2^{n})-l_{2}\frac{l_{3}^{n}-1}{l_{3}-1}.
\end{eqnarray*}
By (\ref{alphan}), (\ref{mun}) and the previous two equations, we have 
\begin{eqnarray*}
\sum_{i=1}^{2^{n-1}}(\mu _{i}+\alpha _{i})&=&\sum_{i=1}^{2^{n-1}-1}(\mu _{i}+\alpha _{i})+\mu_{2^{n-1}}+\alpha_{2^{n-1}}\\
&=&\frac{l_{1}+l_{2}}{l_{3}-2}(l_{3}^{n-1}-2^{n-1})-l_{2}\frac{l_{3}^{n-1}-1}{l_{3}-1}+\frac{3^{n-1}-1}{2}\\
&&+(\mu_{2^{n-1}}+\alpha_{2^{n-1}})\\
&=&\frac{(l_{1}+l_{2})(l_{3}^{n-1}-2^{n-1})}{l_{3}-2}%
+l_{1}l_{3}^{n-1}+3^{n-1}.
\end{eqnarray*} 
Now by (\ref{hn}), we obtain
\begin{eqnarray}
h(n) &=&\left( \sum_{i=1}^{2^{n-1}}(\mu _{i}+\alpha _{i})\right) +2^{n-1}\nonumber\\
&=&\frac{(l_{1}+l_{2})(l_{3}^{n-1}-2^{n-1})}{l_{3}-2}%
+l_{1}l_{3}^{n-1}+3^{n-1}+2^{n-1}.\label{eqn:hn}
\end{eqnarray}%
Since $\frac{l_{3}^{n-1}-2^{n-1}}{l_{3}-2}=\sum_{i=0}^{n-2}l_{3}^{i}2^{n-2-i}\equiv l_{3}$ ($n\geq 3$), then
\begin{eqnarray*}
h(n) &\equiv &\frac{(l_{1}+l_{2})(l_{3}^{n-1}-2^{n-1})}{l_{3}-2}+l_{1}l_{3}+1
\\
&\equiv &(l_{1}+l_{2})l_{3}+l_{1}l_{3}+1 \\
&\equiv &1+l_{2}l_{3}.
\end{eqnarray*}

Let $(j)_{2}:=j_{2}j_{1}$ where $0\leq j\leq 3$. Then
\begin{eqnarray}
S_{4n+j} &=&1+\sum_{i=1}^{m}\epsilon _{i}h(i+2)+j_{2}h(2)+j_{1}h(1)\nonumber\\
&\equiv &1+(1+l_{2}l_{3})t_{n}+j_{2}h(2)+j_{1}h(1),\label{eqn:thue}
\end{eqnarray}%
where $t_n$ is the $n$-th term of the Thue-Morse sequence.
By (\ref{eqn:thue}), when $1+l_{2}l_{3}\equiv 0$, $(S_{4n+j})_{n\geq 0}$ modulo $2$ is a constant
sequence. When $1+l_{2}l_{3}\equiv 1$, $(S_{4n+j})_{n\geq 0}$ modulo $2$ is
the Thue-Morse sequence up
to a coding.
\end{proof}

\begin{example}
  According to Corollary \ref{cor:thue}, the sum-free set $S$ (modulo $2$) in Example
  \ref{eg:const} is the Thue-Morse sequence beginning by $1$. For
  the sum-free set $S$ in Example \ref{eg:nonconst}, the subsequences
$(S_{2n})_{n\geq0}$ and $(S_{2n+1})_{n\geq0}$ modulo $2$ are both the Thue-Morse sequence beginning
by $1$.
\end{example}

\begin{cor}
Let $S$ be the sum-free set corresponding to the sequences of Cantor type (i.e.,
the fixed point of $\sigma (l,0,l+2)\ (l\geq 2)$ beginning by $1$).

\begin{enumerate}
\item When $l$ is odd, then $(S_{n})_{n\geq 0}$ modulo $2$ is the Thue-Morse
sequence $(1-t_{n})_{n\geq 0}$.

\item When $l$ is even, then $(S_{2n})_{n\geq 0}\equiv(S_{2n+1})_{n\geq 0}$
modulo $2$, and they are both the Thue-Morse sequence $(1-t_{n})_{n\geq 0}$.
\end{enumerate}
\end{cor}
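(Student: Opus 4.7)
The plan is to specialise Corollary~\ref{cor:thue} (in particular, the explicit formula (\ref{eqn:hn}) for $h(n)$ derived in its proof) to the Cantor-type parameters $l_1=l$, $l_2=0$, $l_3=l+2$, and then read off $S_n\bmod 2$ directly from the representation $S_n=1+\sum_{i=1}^m\epsilon_ih(i)$ provided by Theorem~\ref{thm1}. The whole argument is a parity trace-through of (\ref{eqn:hn}); no new combinatorial input is needed.

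First I would verify that Theorem~\ref{thm2} applies in this regime. The inequality $7l_3\geq 4(l_1+l_2)+17$ becomes $3l\geq 3$, and $l_1(l_3-1)+l_2>3$ becomes $l(l+1)>3$; both hold whenever $l\geq 2$. Consequently the preceding machinery, including the expression $S_n=1+\sum_{i=1}^m\epsilon_ih(i)$ with $(n)_2=\epsilon_m\cdots\epsilon_1$, is available.

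Next I would specialise (\ref{eqn:hn}): since $(l_1+l_2)/(l_3-2)=l/l=1$, the formula collapses to
\[
h(n)=(l+1)(l+2)^{n-1}+3^{n-1}.
\]
A direct parity check then yields two sharply different behaviours. \emph{(i)} If $l$ is odd, $l+1$ is even and $l+2$ is odd, so $(l+1)(l+2)^{n-1}\equiv 0\pmod 2$ while $3^{n-1}\equiv 1\pmod 2$, giving $h(n)\equiv 1\pmod 2$ for every $n\geq 1$. \emph{(ii)} If $l$ is even, $l+2$ is even and $l+1$ is odd, so $(l+1)(l+2)^{n-1}\equiv 0\pmod 2$ for $n\geq 2$ but equals the odd number $l+1$ when $n=1$; combined with $3^{n-1}$ odd this gives $h(1)\equiv 0\pmod 2$ and $h(n)\equiv 1\pmod 2$ for all $n\geq 2$.

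Finally I would substitute these parities into Theorem~\ref{thm1}. In case (i) every $h(i)$ is odd, so $S_n\equiv 1+\sum_{i=1}^m\epsilon_i\equiv 1+s_2(n)\pmod 2$, where $s_2(n)$ denotes the binary digit sum of $n$; the identity $1+s_2(n)\equiv t_n\pmod 2$ follows at once from the defining recursion $t_0=1$, $t_{2n}=t_n$, $t_{2n+1}=1-t_n$, so $(S_n)\bmod 2$ is the Thue--Morse sequence up to the coding $x\mapsto 1-x$. In case (ii) only the $i=1$ term vanishes modulo $2$, so $S_n\equiv 1+s_2(\lfloor n/2\rfloor)\pmod 2$; since this depends only on $\lfloor n/2\rfloor$, one obtains $S_{2n}\equiv S_{2n+1}\pmod 2$, and the same Thue--Morse identification applies to each subsequence. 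There is no real obstacle here beyond bookkeeping; the only genuinely delicate point is that $h(1)$ behaves differently from $h(n)$ with $n\geq 2$ when $l$ is even, and it is precisely this anomaly that produces the two identical subsequences in part~(2).
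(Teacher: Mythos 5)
Your proposal is correct and follows essentially the same route as the paper: specialise \eqref{eqn:hn} to $l_1=l$, $l_2=0$, $l_3=l+2$ to get $h(1)=l+2\equiv l$ and $h(n)\equiv 1$ for $n\ge 2$, then read off $S_n\bmod 2$ from $S_n=1+\sum_{i=1}^m\epsilon_ih(i)$, with the even-$l$ anomaly at $h(1)$ producing the coincidence $S_{2n}\equiv S_{2n+1}$. The only discrepancy is the usual Thue--Morse indexing ambiguity (whether $t_0=0$ or $t_0=1$), which is already present in the paper's own statement and does not affect the substance of your argument.
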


\begin{proof}
In this case, $l_{1}=l,l_{2}=0$ and $l_{3}=l+2$. Thus by  (\ref{eqn:hn}), $%
h(1)=l+2\equiv l$ and
\[
h(n)=l^{2}+3l+3^{n-1}\equiv 1,\ \ \ \forall~ n>1.
\]

When $l\equiv 1$, by  (\ref{eqn:thue}),
\begin{eqnarray*}
S_{4n+j} &\equiv &1+t_{n}+j_{2}+j_{1} \\
&\equiv &1+(\sum_{i=1}^{m}\epsilon _{i}+j_{2}+j_{1}) \\
&\equiv &1+t_{4n+j}.
\end{eqnarray*}%
Thus for any $n\geq 0$, $S_{n}\equiv 1+t_{n}.$

When $l\equiv 0$, for $n\geq 0$ and $j=0,1$,
\begin{eqnarray*}
S_{2n+j} &=&1+\sum_{i=1}^{m}\epsilon _{i}h(i+1)+j h(1) \\
&\equiv &1+\sum_{i=1}^{m}\epsilon _{i}\equiv 1+t_{n}.
\end{eqnarray*}%
Thus for any $n\geq 0$, $S_{2n}\equiv S_{2n+1}\equiv 1+t_{n}.$
\end{proof}

While $\mu_n$ increases fast, the corresponding sum-free set is not complicated. In fact, we have
\begin{proposition}
Suppose the sequence $(\mu_n)_{n\geq1}$ is increasing and
\begin{equation}  \label{munn}
\mu_{n+1}>2\sum_{i=1}^n\mu_i.
\end{equation}
Let $S_0=1$. Then for any $n\geq1$,
\[
S_n=\sum_{i=1}^n\mu_i+\frac{(n+1)(n+2)}{2}.
\]
\end{proposition}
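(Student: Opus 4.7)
The plan is to prove the formula by strong induction on $n$, carrying along as an auxiliary hypothesis the inequality $2S_{n-1}<S_{n}$; granted this inequality, the formula falls out of a direct count of elements of $S+S$.

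From the Calkin scan, once $S_{0},\dots ,S_{n-1}$ have been pinned down we have $S_{n}-S_{n-1}=\mu _{n}+\alpha _{n}+1$, where $\alpha _{n}=\textrm{Card}\{x\in S+S:\, S_{n-1}<x<S_{n}\}$. Comparing with the asserted value of $S_{n}$, the content of the proposition is exactly that $\alpha _{n}=n$ for every $n\geq 1$, so the problem reduces to counting sums in each gap.

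To establish $\alpha _{n}=n$, I would enumerate the pairs $(i,j)$ with $0\leq i\leq j$ such that $S_{i}+S_{j}\in (S_{n-1},S_{n})$. Since $S_{j}<S_{n}$ forces $j\leq n-1$, these pairs split into two families. For $j\leq n-2$, the previous step of the induction supplies $2S_{n-2}<S_{n-1}$, hence $S_{i}+S_{j}\leq 2S_{n-2}<S_{n-1}$, so nothing is contributed. For $j=n-1$, the $n$ distinct values $S_{0}+S_{n-1},\dots ,S_{n-1}+S_{n-1}$ all lie in $(S_{n-1},2S_{n-1}]$; under the current auxiliary $2S_{n-1}<S_{n}$ they are precisely the elements of $(S+S)\cap (S_{n-1},S_{n})$. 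This yields $\alpha _{n}=n$, and substituting back recovers the stated formula for $S_{n}$.

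It remains to propagate the auxiliary bound from level $n-1$ to level $n$. A direct computation using the just-established formula gives
\[
S_{n}-2S_{n-1}=\mu _{n}-\sum_{i=1}^{n-1}\mu _{i}-\frac{(n+1)(n-2)}{2},
\]
so what is needed is $\mu _{n}>\sum_{i=1}^{n-1}\mu _{i}+(n+1)(n-2)/2$. The hypothesis (\ref{munn}) already provides $\mu _{n}>2\sum_{i=1}^{n-1}\mu _{i}$, and this dominates the required bound as soon as $\sum_{i=1}^{n-1}\mu _{i}\geq (n+1)(n-2)/2$. But iterating (\ref{munn}) yields $\sum_{i=1}^{k+1}\mu _{i}>3\sum_{i=1}^{k}\mu _{i}$, so the partial sums grow at least geometrically with ratio $3$, which easily beats the quadratic correction once the first few values of $n$ are handled directly from the monotonicity of $(\mu _{n})$. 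This exponential-versus-polynomial comparison is really the one genuinely substantive point of the argument; the rest is bookkeeping against the scan that builds $S$ from $\mathbf{w}$.
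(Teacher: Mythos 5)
Your overall strategy is the same as the paper's: reduce everything to showing that the gap $(S_{n-1},S_n)$ contains exactly $n$ elements of $S+S$ (i.e.\ $\alpha_n=n$), obtained by observing that these elements are precisely the $n$ sums $S_0+S_{n-1},\dots,S_{n-1}+S_{n-1}$, and then sum the telescoping relation $S_n=S_{n-1}+\mu_n+\alpha_n+1$. The paper's own proof is far terser (it just asserts $\alpha_{n+1}=\alpha_n+1$ ``by Lemma \ref{lm4}''), so your explicit count and your identification of the growth condition $\sum_{i=1}^{n-1}\mu_i\geq(n+1)(n-2)/2$ as the one substantive estimate are genuinely useful additions.

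However, as written your induction is circular at level $n$: you derive the formula for $S_n$ \emph{assuming} $2S_{n-1}<S_n$, and then you verify $2S_{n-1}<S_n$ \emph{using} that formula. Neither statement is established at level $n$ independently of the other. The repair is to run the argument through the scan that builds $S$ from $\mathbf{w}$, in the correct order: by the induction hypothesis the elements of $S+S$ in $(S_{n-1},2S_{n-1}]$ are exactly the $n$ values $S_i+S_{n-1}$, so that interval contains $S_{n-1}-n$ integers not in $S+S$; since $S_n$ is by construction the $(\mu_n+1)$-st integer after $S_{n-1}$ not lying in $S+S$, the inequality $\mu_n+1>S_{n-1}-n$ (equivalent to your estimate, and guaranteed by (\ref{munn}) together with $\mu_1\geq 1$) forces $S_n>2S_{n-1}$ \emph{before} the formula is known; only then do $\alpha_n=n$ and the closed form follow. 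Two minor points: the geometric lower bound $\sum_{i=1}^{n-1}\mu_i>3^{n-2}\mu_1$ needs $\mu_1\geq 1$, which comes from the paper's standing convention that $(\mu_i)$ is a positive integer sequence rather than from monotonicity alone (for $\mu_1=0$ the statement can actually fail, e.g.\ $\mu_1=0,\mu_2=1,\mu_3=3$ gives $S_3=13$ instead of $14$); and the base case $n=1$, $S_1=\mu_1+3>2S_0$, should be recorded explicitly since the induction leans on it.
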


\begin{proof}
By Lemma \ref{lm4}, for every $n\geq1$, we have $\alpha_{n+1}=\alpha_n+1$
and
\[
\alpha_n=\alpha_{n-1}+1=\alpha_{n-2}+1+1=\cdots=\alpha_1+(n-1)=n,
\]
since $\alpha_1=1$. Hence
\[
S_n=S_0+\sum_{i=1}^n\mu_i+\sum_{i=1}^n\alpha_i+n=\sum_{i=1}^n\mu_i+\frac{%
(n+1)(n+2)}{2}.
\]
\end{proof}

\begin{remark}
$(1)$ The growth order of $\mu _{n}$ is larger than $O(3^{n}).$ In fact, if $%
\mu _{n}=2\sum_{i=1}^{n-1}\mu _{i}$ for any $n\geq 3$, then $\mu _{n}=3\mu
_{n-1}$, and $\mu_2=2\mu_1$. Hence $\mu _{n}=2\times 3^{n-2}\mu _{1}.$

$(2)$ The coefficient $2$ in $(\ref{munn})$ is not crucial.
\end{remark}

\begin{remark}
Assume $(S_{n})_{n\geq 0}$ is a sequence given by $S_{n}=1+\sum_{i=1}^{m}%
\epsilon _{i}h(i)$ where $(n)_{2}:=\epsilon _{m}\cdots \epsilon_1 $ and $%
(h(i))_{i\geq 1}$ is a positive integer sequence. If $h(i)\equiv 1$ for $%
i\geq 0$, then $(S_{n})_{n\geq 0}$ modulo $2$ is the Thue-Morse sequence.
If $h(i)\equiv 0 $ for $i\geq 0$, then $S_{n}\equiv 1$.
\end{remark}

\begin{example}
Let $h(i)=2^{i}$, then $S$ is sum-free. Moreover, $S=\{1,3,5,7,9,\cdots \}$
and it is periodic of period $2$.
\end{example}

\begin{example}
Let $h(i)=(i+1)!$, then $S$ is sum-free. Moreover,
\[
S=\{1,3,7,9,25,27,31,33,\cdots \}
\]%
and it is periodic of period $4$.
\end{example}

\bigskip

\section{Base changing}

The mapping $\theta :\Sigma \rightarrow \mathfrak{S}$ is bijective, so it is
natural to study the properties of the corresponding zero-one sequences
of some sum-free sets. In this section, we will show that the corresponding
zero-one sequences of the sum-free sets defined below are automatic.

\begin{definition}
\label{def1} For any $b\geq 2,~n\geq 0$, let $S=(S_{n})_{n\geq 0}$ be the
sum-free set given by $S_{n}=[(n)_{b}1]_{2b-1}$.
\end{definition}

\begin{definition}[Allouche and Shallit \protect\cite{jps}]
Let $\mathcal{A}$ be a set of non-negative integers. Then we say that $%
\mathcal{A}$ is a $k$\emph{-automatic set} if its characteristic sequence
\[
a_{n}=\left\{
\begin{array}{ll}
1, & \hbox{if $n\in \mathcal{A}$;} \\
0, & \hbox{otherwise,}%
\end{array}%
\right.
\]%
defines a $k$-automatic sequence.
\end{definition}

\begin{lemma}
\label{lem2} The set $S$ in Definition \ref{def1} is sum-free and $(2b-1)$%
-automatic.
\end{lemma}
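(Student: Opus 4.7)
The plan is to verify both claims directly from the base-$(2b-1)$ expansions. Write $(n)_{b}=\epsilon_{k}(n)\cdots\epsilon_{1}(n)$ with $\epsilon_{i}(n)\in\{0,1,\ldots,b-1\}$; then by Definition~\ref{def1},
\[
S_{n}=1+\sum_{i=1}^{k}\epsilon_{i}(n)(2b-1)^{i}.
\]
Since each $\epsilon_{i}(n)\leq b-1<2b-1$ and $1<2b-1$, this displayed expression is literally the base-$(2b-1)$ expansion of $S_{n}$: the digit at position $0$ equals $1$ and the digits at positions $\geq 1$ lie in $\{0,\ldots,b-1\}$.

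For sum-freeness I would pick any $S_{n},S_{m}\in S$ and compute
\[
S_{n}+S_{m}=2+\sum_{i\geq 1}\bigl(\epsilon_{i}(n)+\epsilon_{i}(m)\bigr)(2b-1)^{i}.
\]
The key observation is that no carry occurs when interpreting this as a base-$(2b-1)$ numeral: indeed $2<2b-1$ (which is exactly where the hypothesis $b\geq 2$ enters) and each coefficient $\epsilon_{i}(n)+\epsilon_{i}(m)\leq 2(b-1)=2b-2<2b-1$. Hence the position-$0$ digit of $S_{n}+S_{m}$ equals $2$, whereas every $S_{\ell}\in S$ has position-$0$ digit equal to $1$; by uniqueness of base-$(2b-1)$ representations, $S_{n}+S_{m}\notin S$, proving $S\cap(S+S)=\emptyset$.

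For $(2b-1)$-automaticity I would recast membership as a digit condition: a positive integer $N$ lies in $S$ if and only if its base-$(2b-1)$ expansion has last digit equal to $1$ and all higher digits in $\{0,1,\ldots,b-1\}$. This is a regular property over the alphabet $\Sigma_{2b-1}$, recognised by a small deterministic finite automaton (an initial state, a ``good so far'' accepting state, and a trap state) that reads the base-$(2b-1)$ digits of $N$ starting from the least significant end. Consequently the characteristic sequence of $S$ is $(2b-1)$-automatic and $S$ is a $(2b-1)$-automatic set. I anticipate no real obstacle in the write-up; the only substantive content is the carry-free decomposition above, which is where the assumption $b\geq 2$ is essential.
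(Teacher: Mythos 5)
Your proof is correct and follows essentially the same route as the paper: both arguments rest on the observation that $(S_n)_{2b-1}\in\Sigma_b^{\ast}1$, so that $(S_m+S_n)_{2b-1}\in\Sigma_{2b-1}^{\ast}2$ (no carries, since $b\ge 2$), and both establish automaticity via a three-state DFA recognizing this digit condition. Your write-up is merely a bit more explicit about the carry-free computation, and reads digits least-significant-first where the paper's automaton reads most-significant-first — an immaterial difference since automatic sequences are closed under digit reversal.
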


\begin{proof}
By the definition of $S_n$, it is clear that for any integer $n\geq0$, $%
(S_n)_{2b-1}\in\Sigma_b^{\ast}1$. Hence, for any integers $m,~n\geq0$, $%
(S_m+S_n)_{2b-1}\in\Sigma_{2b-1}^{\ast}2$, which implies that $%
S_m+S_n\not\in S$. Thus, the set $S$ is sum-free.

Let $(a_{n})_{n\geq 0}$ be the characteristic sequence of $S$. Then
\[
a_{n}=\left\{
\begin{array}{ll}
1, & \hbox{if $(n)_{2b-1}\in\Sigma_b^{\ast}1$;} \\
0, & \hbox{otherwise.}%
\end{array}%
\right.
\]%
Thus the sequence $(a_{n})_{n\geq 0}$ can be generated by the $(2b-1)$%
-automaton in Figure 1, which implies that the sequence $(a_{n})_{n\geq 0}$
is $(2b-1)$-automatic.
\begin{figure}[tbp]
\centering
\begin{tikzpicture}[shorten >=1pt,node distance=2cm,on grid,>=stealth,every state/.style={draw=blue!50,very thick,fill=blue!20}]
    \node[state,initial]  (q_0)                      {$q_0/0$};
    \node[state] (q_1) [below left=of q_0,yshift=-1.5cm,xshift=-2cm] {$q_1/1$};
    \node[state] (q_2) [below right=of q_0,yshift=-1.5cm,xshift=2cm] {$q_2/0$};
    \path[->]
    		(q_0) edge [bend right=15]  node [above left]  {1} (q_1)
                    		  edge   node [above right]  {$b,\cdots,2b-2$} (q_2)
    		(q_1) edge [bend right=15]   node [below right]  {$0,2,\cdots,b-1$} (q_0)
    			edge  node [below ]  {$b,\cdots,2b-2$} (q_2)
    		(q_0) edge [loop above] node {$0,2,\cdots,b-1$} ()
    		(q_1) edge [loop below] node {1} ()
                	(q_2) edge [loop below] node {$0,1,\cdots,2b-2$} ();
    \end{tikzpicture}
\label{AUT}
\caption{Automaton generating the set $(a_n)_{n\geq 0}$ in Lemma \protect\ref%
{lem2}.}
\end{figure}
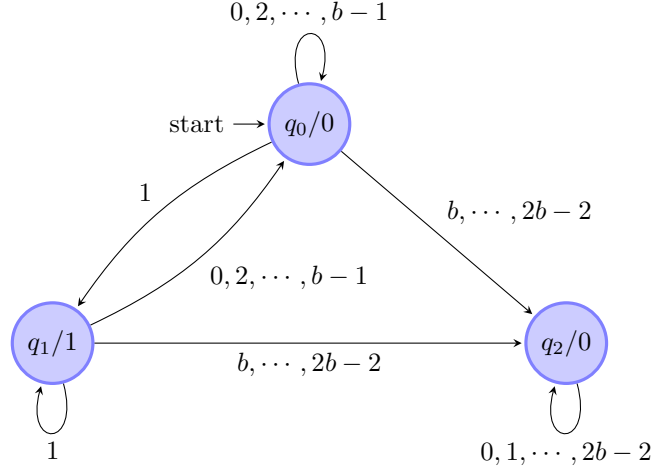
\end{proof}

\begin{theorem}
\label{the1} The zero-one sequence $\mathbf{c}$ corresponding to the
sum-free set $S$ in Definition \ref{def1} is $(2b-1)$-automatic.
\end{theorem}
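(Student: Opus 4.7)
The plan is as follows. Set $k:=2b-1$ and $\ell:=2b-2$. First I would identify $S+S$ explicitly. Each $S_n=[(n)_b\,1]_k$ has a base-$k$ representation whose last digit is $1$ and whose remaining digits lie in $\{0,1,\ldots,b-1\}$. Since the digit-wise sum of two such representations stays in $\{0,1,\ldots,2b-2\}$ and has last digit $2$, no carry propagation occurs in base $k$; conversely, any $n$ with last base-$k$ digit equal to $2$ can be split digit-by-digit into two $S$-summands. Therefore $S+S=\{n\in\mathbb{N}:(n)_k\ \text{ends in}\ 2\}$, so the $*$-positions of the ternary sequence $\mathbf{v}$ in $(\ref{vn})$ are precisely the integers $n$ with $n\equiv 2\pmod{k}$.

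Next I would introduce the Cantor-like sequence $\mathbf{w}=(w_q)_{q\geq 0}$ defined by $w_q=1$ iff every base-$k$ digit of $q$ lies in $\{0,1,\ldots,b-1\}$. This $\mathbf{w}$ is the fixed point of the $k$-uniform substitution $1\mapsto 1^{b}0^{b-1}$, $0\mapsto 0^{k}$, hence is $k$-automatic, and it satisfies the key identities $w_{ks+d}=w_s$ for $0\leq d\leq b-1$ and $w_{ks+d}=0$ for $b\leq d\leq k-1$. A residue-by-residue inspection of $\mathbf{v}$ shows that within each block $[kq,\,kq+k-1]$ one has $v_{kq+1}=w_q$, $v_{kq+2}=*$, and $v_{kq+j}=0$ for all other $j$. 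Deleting the single $*$ from each block yields the compact formula
\[
c_m=\begin{cases} w_{(m-1)/\ell}, & m\equiv 1\pmod{\ell},\\ 0, & \text{otherwise}.\end{cases}
\]

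The core step is to show the $k$-kernel $\mathcal{K}:=\{(c_{k^i m+j})_{m\geq 0}:i\geq 0,\,0\leq j<k^i\}$ is finite. Combining the displayed formula with the two-case identity for $\mathbf{w}$, a direct computation of $c_{km+j}$ for each $j\in\{0,1,\ldots,k-1\}$ yields $c_{km}=c_m$, $c_{km+1}=c_{m+1}$, $c_{km+j}=0$ for $2\leq j\leq b$, and $c_{km+j}=c_{m-(2b-2-j)}$ for $b+1\leq j\leq 2b-2$. Every depth-one kernel element thus belongs to the finite family
\[
\mathcal{F}:=\{\mathbf{0},\;(c_m)_m,\;(c_{m+1})_m,\;(c_{m-1})_m,\;\ldots,\;(c_{m-(b-3)})_m\}.
\]
A parallel calculation for each shift in $\mathcal{F}$, using the wraparound identity $c_{km-p}=c_{k(m-1)+(k-p)}=c_{m-p}$ (which handles the negative offsets arising when the kernel operation is applied to a right shift), shows that $\mathcal{F}$ is closed under taking the $k$-kernel. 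Hence $\mathcal{K}\subseteq\mathcal{F}$ is finite, so $\mathbf{c}$ is $k$-automatic, as required.

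The main obstacle is this closure verification: the basic recurrences for $c_{km+j}$ are straightforward, but one has to check carefully that the kernel of each right shift $(c_{m-p})_m$ (for $1\leq p\leq b-3$) remains inside $\mathcal{F}$. This relies on the wraparound identity above together with the degenerate fact that every residue $j\in\{2,\ldots,b\}$ produces the zero sequence, which forces the relevant kernel entries back into the existing members of $\mathcal{F}$.
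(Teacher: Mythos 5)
Your proposal is correct, and its first half coincides with the paper's: both establish $S+S=(2b-1)\mathbb{N}+2$ by the same carry-free digit argument and thereby locate the $\ast$'s of $\mathbf{v}$ exactly on the residue class $2 \bmod (2b-1)$. Where you genuinely diverge is in how automaticity of $\mathbf{c}$ is then deduced. The paper works with the characteristic sequence $(a_n)$ of $S$ (shown $(2b-1)$-automatic in Lemma \ref{lem2} by exhibiting an automaton), notes $v_n=a_n$ off the $\ast$-class, records the deletion formula (\ref{77}) expressing each residue class of $\mathbf{c}$ modulo $2b-2$ as a residue class of $\mathbf{v}$ modulo $2b-1$, and then invokes the closure theorems 6.8.1 and 6.8.2 of Allouche--Shallit (arithmetic subsequences of a $k$-automatic sequence are $k$-automatic; a sequence all of whose residue classes modulo $d$ are $k$-automatic is $k$-automatic). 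You instead make the structure of $\mathbf{c}$ completely explicit --- supported on a single residue class modulo $2b-2$, with values there given by the fixed point $\mathbf{w}$ of $1\mapsto 1^{b}0^{b-1}$, $0\mapsto 0^{2b-1}$ --- and verify finiteness of the $(2b-1)$-kernel by direct computation. I checked your four cases for $c_{km+j}$ (using $km+j\equiv m+j \pmod{2b-2}$ and the digit description of $\mathbf{w}$) and the closure of the shift family $\{(c_{m-p})_m : 0\le p\le b-3\}\cup\{(c_{m+1})_m,\mathbf{0}\}$ under the kernel operation via your wraparound identity; all of this is correct. Your route is self-contained, avoids the black-box closure theorems, and gives an explicit bound of about $b+1$ kernel elements, at the cost of a longer case analysis; the paper's route is shorter and more modular. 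Two small points to tidy: your block decomposition starts $\mathbf{v}$ at index $0$, so your $\mathbf{c}$ has an extra leading $0$ (your formula gives $c_0=0$, whereas Cameron's bijection forces $c_0=1$); this is a harmless shift by one place but the displayed formula should be reindexed. Also the terms $c_{m-p}$ with $m<p$ must be declared to equal $0$; one checks (as your identities implicitly require) that $c_{km+j}=0$ for the finitely many $m$ with $m<2b-2-j$, so the convention is consistent.
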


\begin{proof}
Recall that the zero-one sequence $\mathbf{c}$ is obtained by deleting
all the $\ast $'s of the sequence $\mathbf{v}=(v_{n})_{n\geq 1}$. For any $%
n\geq 1$, we claim that

\begin{enumerate}
\item $v_n=1 \Leftrightarrow (n)_{2b-1}\in\Sigma_b^{\ast}1,$

\item $v_n=\ast \Leftrightarrow (n)_{2b-1}\in\Sigma_{2b-1}^{\ast}2,$

\item $v_n=0 \Leftrightarrow
(n)_{2b-1}\in\Sigma_{2b-1}^{\ast}\backslash(\Sigma_b^{\ast}1\cup%
\Sigma_{2b-1}^{\ast}2).$
\end{enumerate}

The third assertion is an immediate consequence of the first one and
the second one, and the first assertion follows directly from the
definitions of $S$ and $v_{n}$ in (\ref{vn}). Therefore it suffices to prove
the second assertion.

Since $v_n=\ast \Leftrightarrow n\in S+S$, we need to show that
\begin{equation}  \label{star}
S+S=(2b-1)\mathbb{N}+2.
\end{equation}

For any $S_n\in S$, then $S_n=(2b-1)([(n)_b]_{2b-1})+1.$ Hence, for any
integer $m,n\geq1$, we have
\[
S_m+S_n=(2b-1)([(m)_b]_{2b-1}+[(n)_b]_{2b-1})+2\in(2b-1)\mathbb{N}+2.
\]

Conversely, for any $n\geq 0$, assume $(n)_{2b-1}:=a_{k}a_{k-1}\cdots a_{1}$%
. Hence, for any $1\leq i\leq k$, there exist $b_{i},d_{i}\in \Sigma _{b}$
such that $a_{i}=b_{i}+d_{i}$. Thus, there exist two integers
\begin{eqnarray*}
n_{1} &=&[b_{k}b_{k-1}\cdots b_{1}]_{2b-1}, \\
n_{2} &=&[d_{k}d_{k-1}\cdots d_{1}]_{2b-1},
\end{eqnarray*}%
such that $n=n_{1}+n_{2}$ and $(n_{1})_{2b-1},(n_{2})_{2b-1}\in \Sigma
_{b}^{\ast }.$ Moreover,
\begin{eqnarray*}
(2b-1)n+2 &=&((2b-1)n_{1}+1)+((2b-1)n_{2}+1) \\
&=&S_{[b_{k}b_{k-1}\cdots b_{1}]_{b}}+S_{[d_{k}d_{k-1}\cdots d_{1}]_{b}}\in
S+S.
\end{eqnarray*}%
This implies  (\ref{star}) holds.

Now, we will show that $\mathbf{c}$ is $(2b-1)$-automatic. By (\ref{star}%
), $v_{(2b-1)n+2}=\ast $ for $n\geq 0$. Hence, $\mathbf{c}%
=c_{0}c_{1}\cdots $ satisfies
\begin{equation}
\left\{
\begin{array}{lll}
c_{(2b-2)n} & = & v_{(2b-1)n+1}; \\
c_{(2b-2)n+i} & = & v_{(2b-1)n+i+2},%
\end{array}%
\right.  \label{77}
\end{equation}%
where $1\leq i\leq2b-3.$ By Lemma \ref{lem2}, $S$ is $(2b-1)$-automatic. Its
characteristic sequence $(a_{n})_{n\geq 0}$ is a $(2b-1)$-automatic sequence.
By Theorem 6.8.1 in \cite{jps}, $(a_{(2b-1)n+i})_{n\geq 0}$ is $(2b-1)$%
-automatic for $0\leq i\leq 2b-2$. Note that $v_{n}=a_{n}$ for any $n\notin $
$(2b-1)\mathbb{N}+2$, then $(v_{(2b-1)n+i})_{n\geq 0}$ is $(2b-1)$-automatic
for $0\leq i\leq 2b-2$ and $i\neq 2$. Thus $(c_{(2b-2)n+i})_{n\geq 0}$ is $%
(2b-1)$-automatic for $0\leq i\leq 2b-3$. By Theorem 6.8.2 in \cite{jps}, $%
\mathbf{c}$ is $(2b-1)$-automatic.
\end{proof}

\begin{remark}\label{rem:cantor}
$(1)$ $(2b-1)$ in Theorem \ref{the1} cannot be replaced by $p$, where $p>2b-1$%
, since there do not exist $x,y\in \mathbb{N}$ such that $S+S=x\mathbb{N}%
+y$. \newline
$(2)$ From Theorem \ref{the1}, if $b=2$, then the sequence $\mathbf{c}$ is the
Cantor sequence
\[
101000101000000000101000101\cdots
\]%
$(3)$ If we replace $S_{n}=[(n)_{b}1]_{2b-1}$ by $S_{n}=[(n)_{b}w]_{2b-1}$
where $w\in \Sigma _{b}^{\ast }$, then the corresponding zero-one sequence
is also automatic.
\end{remark}

\medskip

\noindent\textbf{Acknowledgements }
We would like to thank Professor Jacques Peyri\`{e}re for introducing us to this topic, and we gratefully acknowledge
of his many helpful suggestions. We would also like to thank the anonymous 
referee for many helpful comments.

\end{document}